\documentclass[a4j,11pt]{article}

\usepackage{amsmath}
\usepackage{amssymb}
\usepackage{amsthm}
\usepackage{array}
\usepackage{amscd}
\usepackage{cases}
\usepackage{bm}
\usepackage[all]{xy}
\usepackage{authblk}
\usepackage{graphicx}
\usepackage{lscape}

\theoremstyle{plain}
\newtheorem{thm}{Theorem}[section]
\newtheorem{prop}[thm]{Proposition}
\newtheorem{lem}[thm]{Lemma}

\newtheorem{conj}[thm]{Conjecture}
\newtheorem*{conj*}{Conjecture}
\theoremstyle{definition}
\newtheorem{df}{Definition}

\theoremstyle{definition}
\newtheorem{rem}{Remark}
\newtheorem*{rem*}{Remark}
\newtheorem{eg}[thm]{Example}

\theoremstyle{remark}

\numberwithin{equation}{section}

\headheight=0mm 
\headsep=10mm 
\topmargin=-15mm 
\oddsidemargin=-3mm 
\evensidemargin=-3mm 
\textheight=230mm 
\textwidth=165mm 
 

\title{Four-dimensional Painlev\'e-type difference equations}
\author{Hiroshi Kawakami\thanks{\texttt{kawakami@gem.aoyama.ac.jp}}}
\affil{College of Science and Engineering, Aoyama gakuin university,
5-10-1 Fuchinobe, Chuo-ku, Sagamihara-shi, Kanagawa 252-5258, Japan.}
\date{}

\begin{document}
\maketitle

\begin{abstract}
We focus on Fuchsian equations with four accessory parameters and three singular points.
We see that the Fuchsian equations admit a ``degeneration scheme'' in some sense,
which is expected to give rise to a degeneration scheme of discrete isomodromic deformation equations with four-dimensional phase space.
We compute an example of discrete isomonodromic deformation equations of a certain Fuchsian equation.
\end{abstract}

\paragraph{Mathematics Subject Classifications (2010).}34M55, 34M56, 33E17
\paragraph{Key words.}isomonodromic deformation, Painlev\'e-type equation, integrable system, degeneration.


\section{Introduction}\label{sec:intro}
The Painlev\'e equations are second order non-linear ordinary differential equations
discovered by Painlev\'e and Gambier in an attempt to find new special functions.
Recently, there have been many studies concerning generalizations of the Painlev\'e equations
based on various aspects of them, such as
the affine Weyl group symmetry, the spaces of initial conditions, and so on.
Then it is natural to ask: How can we describe those generalizations in a unified way?
The important fact is that 
they can also be obtained as compatibility conditions of linear differential equations.
In other words, they can be regarded as \textit{isomonodromic deformation equations} of some linear differential equations.
Thus 
it is important to give a good description of the whole set of isomonodromic deformation equations.

An isomonodromic deformation is a deformation of a linear differential equation
which keeps its ``monodromy data'' unchanged.
Isomonodromic deformations fall into two classes: the continuous deformation and the discrete deformation.
When we consider the continuous isomonodromic deformation of a linear equation,
we can choose positions of its singular points and
coefficients of its HTL canonical forms (see Definition~\ref{def:HTL}) except the residue matrices 
as deformation parameters.
Then we obtain a system of non-linear differential equations satisfied by the coefficients of the linear equation.
In the course of a continuous isomonodromic deformation,
the residue matrices of the HTL forms 
(so-called ``exponents of formal monodromy'')
stay constant.
However, we can consider a discrete change of the exponents, 
which does not change its monodromy data.
Such a discrete deformation of a linear equation is called a Schlesinger transformation,
which is expressed as a system of non-linear difference equations.
See \cite{JMU, JM} for details on isomonodromic deformations.
In what follows 
we use the term \textit{Painlev\'e-type} equations synonymously with isomonodromic deformation equations.

It is well-known that Painlev\'e-type differential equations can be written in Hamiltonian form.
In particular, the dimensions of the phase spaces of Painlev\'e-type differential equations are even numbers.
Recently, as a first step toward a comprehensive understanding of isomonodromic deformation equations,
a classification of the Painlev\'e-type differential equations with four-dimensional phase space was obtained~\cite{K1, K2, K3, KNS}.
The study of four-dimensional Painlev\'e-type equations
owes much to the classification of Fuchsian equations with four accessory parameters by Oshima~\cite{Os}.

\begin{thm}[Oshima]\label{thm:oshima}
Any irreducible Fuchsian equation with four accessory parameters can be transformed into one of the following 13 equations
by a finite iteration of additions and middle convolutions.
\[
\begin{tabular}{r|ccc}
\#sing. \!\!pt. $= 5$ & $11,11,11,11,11$ & & \\
\hline
$4$ & $21,21,111,111$ & $31,22,22,1111$ & $22,22,22,211$ \\
\hline
$3$ & $211,1111,1111$ & $221,221,11111$ & $32,11111,11111$ \\
      & $222,222,2211$   & $33,2211,111111$ & $44,2222,22211$ \\
      & $44,332,11111111$ & $55,3331,22222$ & $66,444,2222211$
\end{tabular}
\]
\end{thm}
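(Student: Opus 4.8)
The plan is to run Katz's reduction algorithm --- assembled from \emph{additions} and \emph{middle convolutions} --- and to show that it terminates at one of an explicitly enumerable, finite list of ``basic'' Fuchsian equations, which for four accessory parameters consists of exactly the $13$ cases tabulated. First I would attach to a Fuchsian equation (through its companion system) its \emph{spectral type} $\mathbf{m} = (m^{(1)}, \dots, m^{(p)})$: the tuple of partitions of the rank $n$ recording the multiplicities of the eigenvalues of the (semisimple) residue matrices at the $p$ singular points, $\infty$ included. The number of accessory parameters equals $2 - \mathrm{idx}(\mathbf{m})$, where $\mathrm{idx}(\mathbf{m}) = \sum_{j,k}(m^{(j)}_k)^2 - (p-2)n^2$ is the index of rigidity, so ``four accessory parameters'' is the condition $\mathrm{idx}(\mathbf{m}) = -2$. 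I would record the two elementary facts that drive everything: an addition (tensoring with a rank-one system, $A_j \mapsto A_j + c_j I$) leaves $\mathbf{m}$ and $\mathrm{idx}$ unchanged while normalizing the exponents, and a middle convolution $mc_\mu$ preserves $\mathrm{idx}$ and irreducibility while changing $n$ and $\mathbf{m}$ by an explicit combinatorial rule. Conceptually, $\mathbf{m}$ is a dimension vector $\alpha_{\mathbf{m}}$ on a star-shaped Kac--Moody diagram built from the partitions, with $\mathrm{idx}(\mathbf{m}) = (\alpha_{\mathbf{m}}, \alpha_{\mathbf{m}})$; middle convolution realizes a simple reflection, and --- by the solution of the Deligne--Simpson problem --- irreducibility of a generic system with spectral type $\mathbf{m}$ is equivalent to $\alpha_{\mathbf{m}}$ being a positive root.

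Next I would establish the reduction step. After an addition arranging that $0$ has maximal multiplicity in every finite residue and that $-\mu$ has maximal multiplicity in the residue at $\infty$, the rank of $mc_\mu(\mathbf{m})$ equals $(p-1)n - \sum_{j=1}^{p} m^{(j)}_1$, which is strictly smaller than $n$ exactly when $\sum_{j=1}^{p} m^{(j)}_1 > (p-2)n$. Hence, unless $\mathbf{m}$ already satisfies $\sum_{j=1}^{p} m^{(j)}_1 \le (p-2)n$ --- equivalently $\alpha_{\mathbf{m}}$ lies in the fundamental set of the associated Kac--Moody algebra --- a single addition followed by a single middle convolution strictly lowers $n$; and any singular point carrying a scalar residue (a trivial part $m^{(j)} = (n)$) can be removed by a further addition, lowering $p$. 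Since $n$ and $p$ are positive integers the algorithm terminates, and since $\mathrm{idx}$ and irreducibility are preserved along the way, it terminates at an irreducible \emph{basic} spectral type with $\mathrm{idx} = -2$: one with no trivial part for which $\sum_j m^{(j)}_1 \le (p-2)n$.

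It then remains to enumerate the tuples of partitions of a common $n$ into $p$ parts with $\sum_{j,k}(m^{(j)}_k)^2 = (p-2)n^2 - 2$, $\sum_j m^{(j)}_1 \le (p-2)n$, and no trivial part, retaining only those realized by an irreducible system. Combining the index identity with the basicity inequality forces $p \le 5$ and bounds $n$, with $n$ forced down as $p$ increases (for instance $p = 5$ already forces $n = 2$), and it forces every partition to be close to balanced; the resulting finite case analysis yields precisely the $13$ spectral types displayed, once the reducible --- i.e.\ non-root --- candidates are discarded. Putting the pieces together: every irreducible Fuchsian equation with four accessory parameters is carried, by finitely many additions and middle convolutions, to the basic equation with one of those $13$ spectral types.

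The main obstacle is the completeness of this final enumeration: one must verify rigorously that the basicity inequality together with the index identity really reduce the a priori infinite supply of partition-tuples to the finite list given, and that the candidates discarded are exactly the reducible ones --- so that the Deligne--Simpson (root-theoretic) input is essential rather than cosmetic. A secondary point, more technical than conceptual, is to confirm that at every step the rank-minimizing middle convolution genuinely exists (the parameter $\mu$ is available, the operation is invertible, and it keeps the system irreducible), and to keep the bookkeeping of generic versus special exponents consistent --- the additions being inserted precisely to restore the genericity each middle convolution requires.
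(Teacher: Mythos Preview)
The paper does not contain a proof of this theorem: it is stated as Oshima's result and attributed to the reference~\cite{Os}, with no argument given in the present paper. So there is no ``paper's own proof'' to compare against.

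That said, your outline is essentially the standard strategy behind Oshima's classification (and the Katz--Crawley-Boevey circle of ideas more generally): reduce by additions and middle convolutions, interpret spectral types as dimension vectors on a star-shaped graph so that middle convolution becomes the simple reflection at the central vertex, observe that the index of rigidity is the symmetric form and hence preserved, and halt at tuples in the fundamental domain (your ``basic'' condition $\sum_j m_1^{(j)} \le (p-2)n$). The bounds you quote --- $p \le 5$, and $n$ bounded once $p$ is fixed --- do follow from combining the index identity with basicity, and the finite enumeration that remains is exactly the content of Oshima's paper. Your identification of the main obstacle is accurate: the honest work lies in the exhaustive case analysis showing that precisely these $13$ tuples survive and correspond to positive roots (hence to irreducible systems), and your sketch does not carry that out. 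As a proof \emph{plan} it is sound and matches the literature; as a proof it would need the enumeration to be executed in full.
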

\noindent
The tuples of integers in the above table are called spectral types of Fuchsian equations,
which represent multiplicities of characteristic exponents (see Section~\ref{sec:RS}).
We note that the number of accessory parameters of a linear equation coincide with the dimension of the phase space of
the corresponding Painlev\'e-type equation.
Fuchsian equations have only the position of singular points as continuous deformation parameters.
If a Fuchsian equation has $N$ singular points on the Riemann sphere,
then three of the $N$ points can be mapped to $0,1,\infty$ via the M\"obius transformation.
Thus the number of essential deformation parameters of the Fuchsian equation is $N-3$.
The Painlev\'e-type differential equation corresponding to the Fuchsian equation of spectral type $11,11,11,11,11$
is the Garnier system in two variables. 
The Painlev\'e-type equations corresponding to the Fuchsian equations with four singular points in the above table
were clarified by Sakai~\cite{Sak2}:
the Fuji-Suzuki system~\cite{FS1, Ts} (corresponding to $21,21,111,111$), 
the Sasano system~\cite{FS2, Ss} (corresponding to $31,22,22,1111$),
and the sixth matrix Painlev\'e system~\cite{B2, Sak2} (corresponding to $22,22,22,211$).
In \cite{K1, K2, K3, KNS}, 
the degeneration scheme of the four-dimensional Painlev\'e-type differential equations
was obtained starting from the four Painlev\'e-type equations. 
This provides a classification of four-dimensional Painlev\'e-type differential equations.

\vspace{5mm}

\rotatebox{90}{\begin{minipage}{\textheight}
\centering
{\large
\begingroup
\renewcommand{\arraystretch}{1.5}
\begin{xy}
{(0,-12) *{H_{\mathrm{Gar}}^{1+1+1+1+1}}},
{\ar (11,-12);(20,-12)},
{(30,-12) *{H_{\mathrm{Gar}}^{2+1+1+1}}},
{\ar (40,-12);(53,-2)},
{\ar (40,-12);(53,-12)},
{\ar (40,-12);(51,-22)},
{(60,-2) *{H_{\mathrm{Gar}}^{3+1+1}}},
{(60,-12) *{H_{\mathrm{Gar}}^{2+2+1}}},
{(60,-22) *{H_{\mathrm{Gar}}^{\frac32+1+1+1}}},
{\ar (67,-2);(85,3)},
{\ar (67,-2);(85,-7)},
{\ar (67,-2);(83,-17)},
{\ar (67,-12);(85,3)},
{\ar (67,-12);(85,-7)},
{\ar (67,-12);(83,-27)},
{\ar (68,-22);(83,-17)},
{\ar (68,-22);(83,-27)},
{(90,3) *{H_{\mathrm{Gar}}^{4+1}}},
{(90,-7) *{H_{\mathrm{Gar}}^{3+2}}},
{(90,-17) *{H_{\mathrm{Gar}}^{\frac52+1+1}}},
{(90,-27) *{H_{\mathrm{Gar}}^{2+\frac32+1}}},
{\ar (96,3);(115,8)},
{\ar (96,3);(115,-2)},
{\ar (96,-7);(115,8)},
{\ar (96,-7);(115,-12)},
{\ar (96,-7);(115,-22)},
{\ar (97,-17);(115,-2)},
{\ar (97,-17);(115,-22)},
{\ar (97,-27);(115,-2)},
{\ar (97,-27);(115,-12)},
{\ar (97,-27);(115,-22)},
{\ar (97,-27);(113,-32)},
{(120,8) *{H_{\mathrm{Gar}}^5}},
{(120,-2) *{H_{\mathrm{Gar}}^{\frac72+1}}},
{(120,-12) *{H_{\mathrm{Gar}}^{3+\frac32}}},
{(120,-22) *{H_{\mathrm{Gar}}^{\frac52+2}}},
{(120,-32) *{H_{\mathrm{Gar}}^{\frac32+\frac32+1}}},
{\ar (126,8);(145,-2)},
{\ar (126,-2);(145,-2)},
{\ar (126,-12);(145,-2)},
{\ar (126,-12);(145,-22)},
{\ar (126,-22);(145,-2)},
{\ar (126,-22);(145,-22)},
{\ar (126,-32);(145,-22)},
{(150,-2) *{H_{\mathrm{Gar}}^{\frac92}}},
{(150,-22) *{H_{\mathrm{Gar}}^{\frac52+\frac32}}},
{(0,-42) *{H_{\mathrm{FS}}^{A_5}}},
{\ar (5,-42);(25,-42)},
{\ar (5,-42);(25,-52)},
{\ar (5,-42);(20,-12)},
{(30,-42) *{H_{\mathrm{FS}}^{A_4}}},
{\ar (35,-42);(55,-42)},
{\ar (35,-42);(55,-52)},
{\ar (35,-42);(53,-2)},
{(60,-42) *{H_{\mathrm{FS}}^{A_3}}},
{\ar (65,-42);(85,-42)},
{\ar (65,-42);(83,-17)},
{\ar (65,-42);(85,-7)},
{(90,-42) *{H_{\mathrm{Suz}}^{2+\frac32}}},
{\ar (95,-42);(115,-42)},
{\ar (95,-42);(115,8)},
{(120,-42) *{H_{\mathrm{KFS}}^{\frac32+\frac32}}},
{\ar (125,-42);(145,-42)},
{(150,-42) *{H_{\mathrm{KFS}}^{\frac32+\frac43}}},
{\ar (155,-42);(175,-42)},
{(180,-42) *{H_{\mathrm{KFS}}^{\frac43+\frac43}}},
{(30,-52) *{H_{\mathrm{NY}}^{A_5}}},
{\ar (35,-52);(55,-52)},
{\ar (35,-52);(51,-22)},
{(60,-52) *{H_{\mathrm{NY}}^{A_4}}},
{\ar (65,-52);(83,-17)},
{(0,-62) *{H_{\mathrm{Ss}}^{D_6}}},
{\ar (5,-62);(25,-62)},
{\ar (5,-62);(25,-52)},
{(30,-62) *{H_{\mathrm{Ss}}^{D_5}}},
{\ar (35,-62);(55,-62)},
{\ar (35,-62);(55,-52)},
{(60,-62) *{H_{\mathrm{Ss}}^{D_4}}},
{\ar (65,-62);(85,-62)},
{\ar (65,-62);(83,-17)},
{(90,-62) *{H_{\mathrm{KSs}}^{2+\frac32}}},
{\ar (95,-62);(115,-62)},
{(120,-62) *{H_{\mathrm{KSs}}^{2+\frac43}}},
{\ar (125,-62);(145,-62)},
{(150,-62) *{H_{\mathrm{KSs}}^{2+\frac54}}},
{\ar (155,-62);(175,-62)},
{(180,-62) *{H_{\mathrm{KSs}}^{\frac32+\frac54}}},
{(30,-100) *{H^{\mathrm{Mat}}_{\mathrm{VI}}}},
{\ar (35,-100);(55,-100)},
{(60,-100) *{H^{\mathrm{Mat}}_{\mathrm{V}}}},
{\ar (65,-100);(83,-90)},
{\ar (65,-100);(85,-110)},
{(90,-90) *{H^{\mathrm{Mat}}_{\mathrm{III}(D_6)}}},
{\ar (97,-90);(113,-90)},
{\ar (97,-90);(115,-110)},
{(90,-110) *{H^{\mathrm{Mat}}_{\mathrm{IV}}}},
{\ar (95,-110);(115,-110)},
{(120,-90) *{H^{\mathrm{Mat}}_{\mathrm{III}(D_7)}}},
{\ar (127,-90);(143,-90)},
{\ar (127,-90);(145,-110)},
{(120,-110) *{H^{\mathrm{Mat}}_{\mathrm{II}}}},
{\ar (125,-110);(145,-110)},
{(150,-90) *{H^{\mathrm{Mat}}_{\mathrm{III}(D_8)}}},
{(150,-110) *{H^{\mathrm{Mat}}_{\mathrm{I}}}}
\end{xy}
\endgroup
}
\end{minipage}}

\vspace{5mm}

\noindent
Symbols such as $H^{\mathrm{Mat}}_{\mathrm{VI}}$ stand for Hamiltonians for four-dimensional Painlev\'e-type equations.

On the other hand, Fuchsian equations with three singular points admit only trivial continuous isomonodromic deformations,
but admit non-trivial discrete isomonodromic deformations.
In the case of two accessory parameters, 
discrete isomonodromic deformations of Fuchsian equations with three singular points
yield the additive difference Painlev\'e equations
that do not correspond to B\"acklund transformations of differential Painlev\'e equations~\cite{B, DST, DT}.
In this paper, we focus on the Fuchsian equations with three singular points
and four accessory parameters.
By considering the discrete isomonodromic deformation of these Fuchsian equations,
we can obtain four-dimensional Painlev\'e-type difference equations.

The organization of this paper is as follows.
In Section~\ref{sec:LDE}, 
we briefly review some notions related to linear differential equations.
In Section~\ref{sec:deg_scheme}, 
we show that the Fuchsian equations with three singular points and four accessory parameters
admit ``degeneration scheme'' in some sense,
and thereby the above degeneration scheme can be extended further upstream.
In Section~\ref{sec:d-Garnier},
we compute Schlesinger transformations of the linear equation of spectral type $211,1111,1111$ as an example.
The system of difference equations thus obtained can be regarded as a discrete analogue of the Garnier system in two variables.

\section{Preliminaries}\label{sec:LDE}

\subsection{HTL canonical forms}
Let $A(x)$ be a matrix-valued function in $x$.
The transformation
\[
A(x) \mapsto P[A(x)]:=P A(x) P^{-1}+\frac{dP}{dx}P^{-1}
\]
by an invertible matrix $P=P(x)$ is called the \textit{gauge transformation}.
This corresponds to the change of the dependent variable $Z=PY$ of a system of linear differential equations
\begin{align*}
\frac{dY}{dx}=A(x)Y.
\end{align*}

We consider a system of linear differential equations with rational function coefficients
\begin{equation}\label{eq:rational_LDE}
\frac{dY}{dx}=
\left(\sum_{\nu=1}^n\sum_{k=0}^{r_{\nu}}\frac{A_{\nu}^{(k)}}{(x-u_{\nu})^{k+1}}
+\sum_{k=1}^{r_{\infty}}A_{\infty}^{(k)}x^{k-1}
\right)Y,\quad
A_*^{(k)} \in M(m, \mathbb{C}).
\end{equation}
Note that the residue matrix $A_\infty^{(0)}$ at $x=\infty$ is given by
\[
A_\infty^{(0)}=-\sum_{\nu=1}^n A_\nu^{(0)}.
\]
The system can be transformed into the ``canonical form'' at each singular point.

The system (\ref{eq:rational_LDE}) has singularity at $x=u_\nu \, (\nu=1, \ldots, n)$ and $x=\infty=:u_0$.
Set $z=x-u_{\nu} \, (\nu=1, \ldots, n)$ or $z=1/x$.
We consider the system around $z=0$:
\begin{equation*}
\frac{dY}{dz}=\left(
\frac{A_0}{z^{r+1}}+\frac{A_1}{z^{r}}+\cdots+A_{r+1}+A_{r+2} z+\cdots
\right)Y.
\end{equation*}
Let $\mathcal{P}_z=\cup_{p > 0}\mathbb{C}(\!(z^{1/p})\!)$ be the field of Puiseux series
where $\mathbb{C}(\!(t)\!)$ is the field of formal Laurent series in $t$.

\begin{df}[HTL canonical form]\label{def:HTL}
An element
\begin{equation*}
\frac{T_0}{z^{l_0}}+\frac{T_1}{z^{l_1}}+\cdots+\frac{T_{s-1}}{z^{l_{s-1}}}+\frac{\Theta}{z^{l_s}}
\end{equation*}
in $M(m, \mathcal{P}_z)$ satisfying the following conditions:
\begin{itemize}
\item $l_j$ is a rational number, $l_0 > l_1 > \cdots > l_{s-1} > l_s=1$,
\item $T_0, \ldots, T_{s-1}$ are commuting diagonalizable matrices,
\item $\Theta$ is a (not necessarily diagonalizable) matrix that commutes with all $T_j$'s
\end{itemize}
is called an \textit{HTL canonical form}, or \textit{HTL form} for short.
\qed
\end{df}

\begin{thm}[Hukuhara~\cite{Huk}, Turrittin~\cite{Tur}, Levelt~\cite{Lev}]
For any
\begin{equation*}
A(z)=\frac{A_0}{z^{r+1}}+\frac{A_1}{z^{r}}+\cdots \in M(m, \mathbb{C}(\!(z)\!)),
\end{equation*}
there exists $P \in \mathrm{GL}(m, \mathcal{P}_z)$ such that $P[A(z)]$ is an HTL form
\[
\frac{T_0}{z^{l_0}}+\frac{T_1}{z^{l_1}}+\cdots+\frac{T_{s-1}}{z^{l_{s-1}}}
+\frac{\Theta}{z}.
\]
Here $l_0,\ldots,l_{s-1}$ are uniquely determined only by $A(z)$.

If
\begin{equation*}
\frac{\tilde{T}_0}{z^{l_0}}+\frac{\tilde{T}_1}{z^{l_1}}+\cdots+\frac{\tilde{T}_{s-1}}{z^{l_{s-1}}}
+\frac{\tilde{\Theta}}{z}
\end{equation*}
is another HTL form of the same $A(z)$,
then there exist $g \in \mathrm{GL}(m, \mathbb{C})$ and $k \in \mathbb{Z}_{\ge 1}$ such that
\begin{equation*}
\tilde{T}_j=gT_j g^{-1}, \quad \exp(2\pi i k \tilde{\Theta})=g\exp(2\pi i k \Theta)g^{-1}
\end{equation*}
hold.
\end{thm}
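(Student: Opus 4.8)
The plan is to prove the existence of the reduction and the uniqueness of the data $(l_j,T_j,\Theta)$ separately, along the lines of the classical formal reduction theory of Turrittin and Levelt adapted to the Puiseux field $\mathcal{P}_z$. For existence I would induct on the pair (matrix size $m$, Poincar\'e rank $r$) in lexicographic order. If the leading coefficient $A_0$ has at least two distinct eigenvalues, the Splitting Lemma provides $\hat{P}\in\mathrm{GL}(m,\mathbb{C}[[z]])$ with $\hat{P}[A(z)]$ block diagonal, each block carrying the eigenvalues of $A_0$ lying in it, and I recurse on the (strictly smaller) blocks, the top slope $l_0=r+1$ being common to all. Otherwise $A_0$ has a single eigenvalue $\lambda$; since $\lambda z^{-r-1}I$ is central and hence fixed by every gauge transformation, I split it off as the leading term $T_0/z^{l_0}$ with $T_0=\lambda I$, $l_0=r+1$, and pass to $A(z)-\lambda z^{-r-1}I$, whose leading coefficient $A_0-\lambda I$ is nilpotent: if it vanishes the Poincar\'e rank has dropped and I recurse, and if not I apply a ramified shearing transformation --- substitute $z=w^p$ and conjugate by $\mathrm{diag}(1,w,w^2,\dots)$ adapted to a Jordan basis of $A_0-\lambda I$ --- and invoke Moser's reduction, which strictly decreases a rational numerical invariant (the Moser rank) until the leading coefficient either splits or shrinks. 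The base case $r=0$ is the regular singular one: a shearing $\mathrm{diag}(z^{k_1},\dots,z^{k_m})$ normalizes the eigenvalue spacing of the residue and a subsequent $\mathbb{C}[[z]]$-gauge transformation brings $A(z)$ to $\Theta/z$. Reassembling the recursion produces an HTL form, and the resulting $l_0,\dots,l_{s-1}$ depend only on $A(z)$ because they are the slopes of the Newton polygon of a scalar operator attached to $A(z)$ via a cyclic vector.

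For uniqueness it suffices to show: if $B$ and $\tilde{B}$ are HTL forms with $\tilde{B}=P[B]$ for some $P\in\mathrm{GL}(m,\mathcal{P}_z)$, then their slopes agree and there are $g\in\mathrm{GL}(m,\mathbb{C})$ and $k\ge1$ with $\tilde{T}_j=gT_jg^{-1}$ and $\exp(2\pi ik\tilde{\Theta})=g\exp(2\pi ik\Theta)g^{-1}$ (the general case reduces to this via $P=P_2P_1^{-1}$ when $B=P_1[A]$ and $\tilde{B}=P_2[A]$). Because the $T_j$ commute and each $l_j\neq1$, the matrix $Q(z):=-\sum_{j=0}^{s-1}\frac{1}{l_j-1}T_jz^{-(l_j-1)}$ is well-defined and $\hat{Y}:=e^{Q(z)}z^{\Theta}$ is a formal fundamental solution of $Y'=BY$ (every factor commutes with every other), and likewise $\hat{\tilde{Y}}:=e^{\tilde{Q}(z)}z^{\tilde{\Theta}}$ for $\tilde{B}$; then $P\hat{Y}$ and $\hat{\tilde{Y}}$ both solve $Y'=\tilde{B}Y$, so $P\hat{Y}=\hat{\tilde{Y}}C$ with $C\in\mathrm{GL}(m,\mathbb{C})$. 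I then choose $p\ge1$ so that $P$, $Q$, $\tilde{Q}$ are single-valued in $w:=z^{1/p}$ and $p(l_j-1),p(\tilde{l}_j-1)\in\mathbb{Z}$; on the $w$-disc the formal monodromy $w\mapsto e^{2\pi i}w$ sends $\hat{Y}\mapsto\hat{Y}e^{2\pi ip\Theta}$ and $\hat{\tilde{Y}}\mapsto\hat{\tilde{Y}}e^{2\pi ip\tilde{\Theta}}$, so applying it to $P\hat{Y}=\hat{\tilde{Y}}C$ (and using that $P$ is single-valued in $w$) yields $Ce^{2\pi ip\Theta}=e^{2\pi ip\tilde{\Theta}}C$, which is the asserted relation with $k=p$, $g=C$. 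To handle the slopes and the $T_j$ I would first conjugate $B$ and $\tilde{B}$ by constant matrices so that all $T_j$ and $\tilde{T}_j$ are diagonal (this only conjugates $g$ and leaves $k$ fixed), write $P=e^{\tilde{Q}(w^p)}w^{p\tilde{\Theta}}C\,w^{-p\Theta}e^{-Q(w^p)}$, and use that $P$ --- a Laurent series in $w$ --- carries no exponential factor: the $(i,j)$ entry of the right-hand side contains $e^{\tilde{q}_i(w)-q_j(w)}$ with $\tilde{q}_i-q_j$ a polynomial in $w^{-1}$ having no constant term, so linear independence of the exponentials forces $\tilde{q}_i=q_j$ whenever $P_{ij}\neq0$. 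Invertibility of $P$ gives a permutation $\sigma$ with $\tilde{q}_i=q_{\sigma(i)}$; the multisets of diagonal data thus coincide, which gives $\{l_j\}=\{\tilde{l}_j\}$ and hence $l_j=\tilde{l}_j$ (decreasing order), and after a further permutation of the basis $\tilde{T}_j=T_j$ for all $j$. Finally, comparing entries of $P$ once more shows $C$ is block diagonal for the partition of indices on which the $q_i$ are constant, hence commutes with the now block-scalar $T_j$, so $g:=C$ also satisfies $\tilde{T}_j=gT_jg^{-1}$; undoing the constant conjugations completes the argument.

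The step I expect to be the main obstacle is the termination of the ramified-shearing loop in the existence argument --- that the Moser rank can be lowered only finitely often before the leading coefficient splits or shrinks --- which is the real content of the Turrittin--Moser--Katz reduction theory. By contrast, once the explicit formal fundamental solution $e^{Q(z)}z^{\Theta}$ is in hand, the uniqueness half is essentially bookkeeping with the exponential parts $e^{q_i}$ and the formal monodromy.
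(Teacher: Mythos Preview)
The paper does not prove this theorem. It is stated as a classical result with attributions to Hukuhara, Turrittin, and Levelt, and the exposition moves on immediately to the definitions of Poincar\'e rank and ramified singular points; there is no argument to compare against.

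Your outline is the standard one from those references (with later streamlining due to Moser and Babbitt--Varadarajan). The existence half --- split when the leading coefficient has several eigenvalues, otherwise strip off the scalar part and apply a ramified shearing to a nilpotent leading term, controlled by a Moser-type invariant --- is exactly Turrittin's scheme, and you correctly flag the termination of the shearing loop as the nontrivial step. The uniqueness half via the explicit formal fundamental solution $e^{Q(z)}z^{\Theta}$ and formal monodromy on a $p$-fold cover is Levelt's argument, and your derivation of $k=p$, $g=C$ from $P\hat{Y}=\hat{\tilde{Y}}C$ is on the right track. One small point worth tightening: when you ``permute the basis'' to force $\tilde{T}_j=T_j$ and then argue that $C$ is block diagonal, keep track of the fact that this permutation has already been absorbed into the constant conjugations you allowed at the outset, so that the final $g$ is the composite of those conjugations with $C$; as written, the identity of $g$ drifts a little between the $T_j$-statement and the $\Theta$-statement.
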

\noindent
The number $l_0-1$ is called the \textit{Poincar\'e rank} of the singular point.
If there is a rational number $l_j \in \mathbb{Q} \setminus \mathbb{Z}$, the singular point is called a \textit{ramified} irregular singular point.
A linear system is said to be of \textit{ramified type} if the system has a ramified irregular singular point.
We consider linear systems of unramified type below.

\subsection{Riemann schemes and spectral types}\label{sec:RS}
Consider an HTL form
\begin{equation*}
\frac{T_0}{z^{b+1}}+\frac{T_1}{z^b}+\cdots+\frac{T_{b-1}}{z^2}+\frac{\Theta}{z} \quad (b \in \mathbb{Z}_{\ge 0}).
\end{equation*}
Here we assume $T_j$'s and $\Theta$ are in Jordan canonical form.
When $\Theta$ is a diagonal matrix, we denote this HTL form by
\[
\begin{array}{c}
x=u_i \\
\overbrace{\begin{array}{ccccc}
   t^0_1  & t^1_1 & \ldots & t^{b-1}_1  & \theta_1\\
   \vdots & \vdots    &        & \vdots & \vdots \\
   t^0_m  & t^1_m & \ldots & t^{b-1}_m  & \theta_m
           \end{array}}
\end{array}
\]
where $T_j=\mathrm{diag}(t^j_1,\ldots,t^j_m)$, $\Theta=\mathrm{diag}(\theta_1,\ldots,\theta_m)$.
We sometimes identify each column ${}^t\!(t^j_1, \ldots, t^j_m)$ with the matrix $T_j$ itself.
Then we write
\[
\begin{array}{c}
x=u_i \\
\overbrace{\begin{array}{ccccc}
   T_0 & T_1 & \ldots & T_{b-1}  & \Theta \\
           \end{array}}
\end{array}.
\]
In this paper we call $t^i_j$'s and $\theta_j$'s \textit{exponents},
and we also call $\theta_j$'s \textit{characteristic exponents} (this terminology might be different from the usual one).
The table of the HTL forms represented by the above formula at all singular points is called the \textit{Riemann scheme} of a linear system.
In this paper we consider HTL forms whose residue matrices are diagonalizable.
We also assume that 
any two eigenvalues of $\Theta$ do not differ by a non-zero integer.

When we are not interested in the values of the exponents, a Riemann scheme is represented by a \textit{spectral type}.
The spectral type of an unramified linear system is described by the ``refining sequence of partitions''.
In the following we look at Riemann schemes and spectral types for particular cases.
For a general description of spectral types, see~\cite{K1, KNS}.

When $r_\nu=0$ for all singular points in (\ref{eq:rational_LDE}),
the Riemann scheme is merely a table of the eigenvalues of all $A_\nu^{(0)}$'s.
The spectral type is a tuple of partitions of $m$ which represents the multiplicities of the eigenvalues.
\begin{eg}
The Riemann scheme of a linear system
\[
\frac{dY}{dx}=\left( \frac{A_1^{(0)}}{x-u_1}+\frac{A_2^{(0)}}{x-u_2}+\frac{A_3^{(0)}}{x-u_3} \right)Y
\]
with the condition
\begin{equation*}
A_1^{(0)} \sim \mathrm{diag}(\theta^1_1, \theta^1_1, \theta^1_1, \theta^1_2), \quad
A_2^{(0)} \sim \mathrm{diag}(\theta^2_1, \theta^2_1, \theta^2_2, \theta^2_2), \quad
A_3^{(0)} \sim \mathrm{diag}(\theta^3_1, \theta^3_1, \theta^3_2, \theta^3_2),
\end{equation*}
and $A_\infty^{(0)}=-(A_1^{(0)}+A_2^{(0)}+A_3^{(0)}) \sim \mathrm{diag}(\theta^\infty_1, \theta^\infty_2, \theta^\infty_3, \theta^\infty_4)$ is
\[
\left(
\begin{array}{cccc}
  x=u_1 & x=u_2 & x=u_3 & x=\infty \\
   \begin{array}{c} \theta^1_1 \\ \theta^1_1 \\ \theta^1_1 \\ \theta^1_2 \end{array}
& \begin{array}{c} \theta^2_1 \\ \theta^2_1 \\ \theta^2_2 \\ \theta^2_2 \end{array}
& \begin{array}{c} \theta^3_1 \\ \theta^3_1 \\ \theta^3_2 \\ \theta^3_2 \end{array}
& \begin{array}{c} \theta^\infty_1 \\ \theta^\infty_2 \\ \theta^\infty_3 \\ \theta^\infty_4 \end{array}
\end{array}
\right).
\]
The spectral type of the system is $31,22,22,1111$.
\qed
\end{eg}
The next case is that one of the $r_\nu$'s is equal to one and the others are zero.
For example, we consider the case when $r_\infty=1$ and $r_\nu=0 \ (\nu=1, \ldots, n)$
\begin{equation}\label{eq:example_RS}
\frac{dY}{dx}=
\left(\sum_{\nu=1}^n \frac{A_\nu^{(0)}}{x-u_\nu}
+A_{\infty}^{(1)}
\right)Y,\quad
A_*^{(k)} \in M(m, \mathbb{C}).
\end{equation}
Its Riemann scheme can be obtained as follows.
Assume that $A_{\infty}^{(1)}$ is a diagonal matrix
\begin{align*}
A_{\infty}^{(1)}=
a_1 I_{m_1} \oplus \cdots \oplus a_k I_{m_k}, \quad
a_i \ne a_j \ (i \ne j), \quad m_1+\cdots+m_k=m,
\end{align*}
and the matrices $A_\nu^{(0)}$'s are similar to diagonal matrices $\Theta_\nu$'s respectively.
Partition the matrix $A_\infty^{(0)}=-\sum_{\nu=1}^n A_\nu^{(0)}$ into submatrices according to $A_{\infty}^{(1)}$
\begin{align*}
\begin{pmatrix}
\Theta^\infty_1 & A_{12} & \ldots & A_{1k} \\
A_{21} & \Theta^\infty_2 & \ldots & A_{2k} \\
\vdots & \vdots & \ddots & \vdots \\
A_{k1} & A_{k2} & \ldots & \Theta^\infty_k
\end{pmatrix}
\end{align*}
where $A_{ij}$ is an $m_i \times m_j$ matrix.
Here, using the conjugate action of
\[
\mathrm{Stab}( A_{\infty}^{(1)} )=
\left\{\, g \in \mathrm{GL}(m, \mathbb{C}) \,\mid\, g A_{\infty}^{(1)} g^{-1}=A_{\infty}^{(1)}\, \right\},
\]
we can choose $\Theta^\infty_j$'s so that they are diagonal.
Then the Riemann scheme of (\ref{eq:example_RS}) is given by
\[
\left(
\begin{array}{cccc}
  x=u_1 & \ldots & x=u_n & x=\infty \\
\Theta_1 & \ldots & \Theta_n
& \overbrace{\begin{array}{cc}
     -a_1 I_{m_1} & \Theta^\infty_1 \\
     \vdots & \vdots \\
     -a_k I_{m_k} & \Theta^\infty_k
        	\end{array}}
\end{array}
\right).
\]
Let $\lambda_\nu$ be the partition of $m$ which represents the multiplicities of the diagonal entries of $\Theta_\nu$.
Similarly, let $\mu_j$ be the partition of $m_j$ determined by the diagonal entries of $\Theta^\infty_j$.
Then the spectral type of~(\ref{eq:example_RS}) is
\[
\lambda_1, \, \ldots, \, \lambda_n, \, (\mu_1)(\mu_2)\cdots(\mu_k).
\]
Sometimes $(\mu_1)(\mu_2)\cdots(\mu_k)$ is written as a pair of two partitions $[\lambda, \mu]$.
Here $\lambda=m_1, \ldots, m_k$, 
and $\mu$ is the partition of $m$ obtained by arranging $\mu_1, \ldots, \mu_k$.

\subsection{Laplace transform}\label{sec:Laplace}
It is known that for any rational function matrix with $r_\infty=0$
\begin{equation}
A(x)=\sum_{\nu=1}^n\sum_{k=0}^{r_{\nu}}\frac{A_{\nu}^{(k)}}{(x-u_{\nu})^{k+1}}, \quad
A_{\nu}^{(k)} \in M(m, \mathbb{C}),
\end{equation}
there exist a natural number $n$,
an $n \times n$ matrix $T$, an $m \times n$ matrix $Q$, and
an $n \times m$ matrix $P$ such that the following holds
\begin{equation}
A(x)=Q(xI-T)^{-1}P.
\end{equation}
Moreover the quadruple $(n, T, Q, P)$ is essentially unique provided that $n$ is minimal (see \cite{Y1}).
In the following we often write a scalar matrix as $k$ instead of $kI$.
Thus we can write a system with $r_\infty=1$ in the form
\begin{equation}\label{eq:Laplace_before}
\frac{dY}{dx}=\left( Q(x-T)^{-1}P+S \right)Y.
\end{equation}

Using this expression, we can show that a system with $r_\infty=1$ behaves quite symmetric under the Laplace transform.
In fact, by applying the Laplace transform $x \mapsto -\frac{d}{dx}$, $\frac{d}{dx} \mapsto x$,
we obtain the following system~\cite{Y2}:
\begin{equation}\label{eq:Laplace_after}
\frac{dY}{dx}=\left( -P(x-S)^{-1}Q-T \right)Y.
\end{equation}
When the matrices $S$ and $T$ are both diagonalizable,
the correspondence of the Riemann schemes of (\ref{eq:Laplace_before}) and (\ref{eq:Laplace_after}) is as follows.

Let the rank of the system (\ref{eq:Laplace_before}) be $m$
and the size of $T$ be $n$.
We set 
\begin{align*}
S=s_1 I_{m_1} \oplus \cdots \oplus s_k I_{m_k}, \quad
T=t_1 I_{n_1} \oplus \cdots \oplus t_l I_{n_l}.
\end{align*}
We assume $m \le n$ (otherwise, consider the opposite correspondence of the following).
Under this assumption, $m_j \le n$ holds.
Moreover, with appropriate choice of $Q$ and $P$,
$n_j$ gives the rank of the residue matrix of the coefficient matrix of (\ref{eq:Laplace_before}) at $x=t_j$.
Thus we can assume $n_j \le m$ without loss of generality.
We partition $Q$ and $P$ as follows:
\begin{align*}
Q=
\begin{pmatrix}
Q_1 & \ldots & Q_l
\end{pmatrix}
=
\begin{pmatrix}
Q^1 \\
\vdots \\
Q^k
\end{pmatrix}, \quad
P=
\begin{pmatrix}
P_1 \\
\vdots \\
P_l
\end{pmatrix}
=
\begin{pmatrix}
P^1 & \ldots & P^k
\end{pmatrix}.
\end{align*}
Here $Q_j$ is an $m \times n_j$ matrix,
$P_j$ is an $n_j \times m$ matrix,
$Q^j$ is an $m_j \times n$ matrix, and
$P^j$ is an $n \times m_j$ matrix.
We assume that $Q_jP_j$ and $-Q^jP^j$ are similar to diagonal matrices $\Theta_j$ and $K_j$ respectively.
Then the Riemann scheme of (\ref{eq:Laplace_before}) is
\[
\left(
\begin{array}{cccc}
  x=t_1 & \ldots & x=t_l & x=\infty \\
\Theta_1 & \ldots & \Theta_l 
& \overbrace{\begin{array}{cc}
     -s_1 I_{m_1} & K_1 \\
     \vdots & \vdots \\
     -s_k I_{m_k} & K_k
        	\end{array}}
\end{array}
\right),
\]
and the Riemann scheme of (\ref{eq:Laplace_after}) is
\[
\left(
\begin{array}{cccc}
  x=s_1 & \ldots & x=s_k & x=\infty \\
\tilde{K}_1 & \ldots & \tilde{K}_k
& \overbrace{\begin{array}{cc}
     t_1 I_{n_1} & \tilde{\Theta}_1 \\
     \vdots & \vdots \\
     t_l I_{n_l} & \tilde{\Theta}_l
        	\end{array}}
\end{array}
\right).
\]
Here $\tilde{\Theta}_j$ denotes an $n_j \times n_j$ diagonal matrix obtained by eliminating $m-n_j$ zeros from $\Theta_j$,
and $\tilde{K}_j$ denotes an $n \times n$ diagonal matrix obtained by adding $n-m_j$ zeros to $K_j$.

\subsection{Schlesinger transformations}

The Schlesinger transformation \cite{Sc} was originally introduced as a discrete deformation of a Fuchsian system.
This discrete deformation corresponds to shifting the characteristic exponents by integers. 
For a detailed description of Schlesinger transformations, see \cite{JM}.
We will deal with Schlesinger transformations of a certain Fuchsian system in Section~\ref{sec:d-Garnier}.

A Schlesinger transformation of a Fuchsian system is realized as the compatibility condition of the Fuchsian system
and a system of linear difference equations: 
\begin{equation*}
\left\{
\begin{aligned}
\frac{dY}{dx}&=A(x)Y, \quad A(x)=\sum_{\nu=1}^n\frac{A_\nu}{x-u_\nu}, \\
\overline{Y}&=R(x)Y,
\end{aligned}
\right.
\end{equation*}
where the Fuchsian system is usually normalized so that the residue matrix at $x=\infty$ is diagonal.
In Section~\ref{sec:d-Garnier}, however, we will adopt a different gauge from the usual one.
Here $R=R(x)$ is a matrix whose entries are rational functions in $x$
and chosen so that the system of differential equations satisfied by $\overline{Y}$
\[
\frac{d\overline{Y}}{dx}=R[A(x)] \overline{Y}
\]
is a Fuchsian system with the same position of singular points as,
and the similar gauge to, the original system.
The matrix $R(x)$ is called the \textit{multiplier} of this transformation (or deformation).
Comparing $\overline{A}(x):=R[A(x)]$ with the original $A(x)$,
we have a system of difference equations satisfied by the entries of  $A_\nu$'s.

\section{Four-dimensional Painlev\'e-type difference equations}\label{sec:deg_scheme}
As mentioned in Section~\ref{sec:intro}, there are nine Fuchsian equations
which have four accessory parameters and three singular points.
They have only trivial continuous isomonodromic deformations
but admit non-trivial discrete isomonodromic deformations.
Therefore these nine Fuchsian equations can yield four-dimensional \textit{additive difference} Painlev\'e-type equations.

In this section we will see that the nine Fuchsian equations admit a ``degeneration scheme'' in some sense,
which is expected to give rise to a degeneration scheme of corresponding Painlev\'e-type difference equations.
To see this, we introduce an equivalence relation of spectral types.

Let $S_1$ and $S_2$ be spectral types. 
Then $S_1$ is said to be equivalent to $S_2$
if a linear system of spectral type $S_2$ is obtained by a finite number of applications of a M\"{o}bius transformation,
the Laplace transform, 
and an addition 
from a linear system of spectral type $S_1$.
Here by ``addition'' we mean a shift of a coefficient matrix of (\ref{eq:rational_LDE}) by a scalar matrix
\[
A_{\nu}^{(k)} \mapsto A_{\nu}^{(k)}+\alpha I_m,
\]
which can be realized by a scalar gauge transformation.
We denote the equivalence class of a spectral type $S$ by $[S]$.
Moreover, we write $[S_1] \to [S_2]$ if there exist linear systems $E_1$ and $E_2$ whose spectral types are equivalent to 
$S_1$ and $S_2$ respectively such that $E_2$ is obtained from $E_1$ by means of a confluence of singular points or a degeneration of an HTL form.
For a detailed description of these two kinds of degenerations of linear systems, see~\cite{K1,K2,K3,KNS}.
\begin{eg}
We take the linear system of spectral type  $32,11111,11111$ as an example.
The confluence of singular points represented by $32$ and $11111$
leads to a non-Fuchsian system of spectral type $11111, (111)(11)$.
Applying a M\"{o}bius transformation if necessary, we can write the non-Fuchsian system in the form
\begin{equation}\label{eq:(111)(11),11111}
\frac{dY}{dx}=\left( \frac{QP}{x}+S \right)Y,
\end{equation}
which is a system (\ref{eq:Laplace_before}) with $T=O$.
Moreover, we can assume that the Riemann scheme of (\ref{eq:(111)(11),11111}) is of the form
\[
\left(
\begin{array}{cc}
  x=0 & x=\infty \\
\begin{array}{c} 0 \\ \alpha_1 \\ \alpha_2 \\ \alpha_3 \\ \alpha_4 \end{array}
& \overbrace{\begin{array}{cc}
     0 & \beta_1 \\
     0 & \beta_2 \\
     0 & \beta_3 \\
    -1 & \beta_4 \\
    -1 & \beta_5 \\
        	\end{array}}
\end{array}
\right),
\]
namely, we can adjust an exponent of maximum multiplicity at each singular point (except characteristic exponents at $x=\infty$)
to zero using additions.
Then, applying the Laplace transform to the non-Fuchsian system (\ref{eq:(111)(11),11111}),
we have a Fuchsian system of spectral type $211,1111,1111$ whose Riemann scheme is given by
\[
\left(
\begin{array}{ccc}
  x=0 &x=1 & x=\infty \\
\begin{array}{c} 0 \\ \beta_1 \\ \beta_2 \\ \beta_3 \end{array}
& \begin{array}{c} 0 \\ 0 \\ \beta_4 \\ \beta_5 \end{array}
& \begin{array}{c} \alpha_1 \\ \alpha_2 \\ \alpha_3 \\ \alpha_4 \end{array}
\end{array}
\right).
\]
The above relation of linear systems is illustrated in the following diagram:
\[
\xymatrix{
32,11111,11111  \ar[r]^{} & (111)(11), 11111  \ar@{=}[d]^{\text{\rotatebox{90}{$\sim$}}}_{\text{equivalent}} & \\
  & 211,1111,1111 &
}
\]
This shows $[32,11111,11111] \to [211,1111,1111]$.
\qed
\end{eg}

\begin{rem}
The classification of Fuchsian systems with two accessory parameters is due to Kostov~\cite{Ko}:
\[
\begin{tabular}{r|ccc}
\#sing. \!\!pt. = 4 & $11,11,11,11$ & & \\
\hline
3 & $111,111,111$ & $22,1111,1111$ & $33,222,111111$
\end{tabular}
\]
The equivalence classes of the spectral types with three singular points in the above table admit the following degeneration scheme
\[
[33,222,111111] \to [22,1111,1111] \to [111,111,111].
\]
The difference Painlev\'e equations of type $A_2^{(1)*}$, $A_1^{(1)*}$, and $A_0^{(1)**}$ can be derived from
Fuchsian systems of these three spectral types
\begin{center}
$111,111,111$, \quad $22,1111,1111$, \quad $33,222,111111$,
\end{center}
respectively~\cite{B, DST, DT, Sak1}.
\qed
\end{rem}

By direct calculation,
we have the following

\begin{thm}\label{thm:scheme_difference}
The equivalence classes of the Fuchsian systems with four accessory parameters
and three singular points admit the following degeneration scheme.

\vspace{3mm}
{\small
\begin{xy}
{(0,0) *{\begin{tabular}{c}
$[44,332,11111111]$
\end{tabular}
}},
{\ar (13.5,0);(27,0)},
{\ar (13.5,0);(27,-20)},
{\ar (12.5,-60);(28,-40)},
{\ar (12.5,-60);(28,-60)},
{(0,-60) *{\begin{tabular}{c}
$[66,444,2222211]$
\end{tabular}
}},
{(40,0) *{\begin{tabular}{c}
$[32,11111,11111]$
\end{tabular}
}},
{(40,-20) *{\begin{tabular}{c}
$[33,2211,111111]$
\end{tabular}
}},
{(40,-40) *{\begin{tabular}{c}
$[55,3331,22222]$
\end{tabular}}},
{(40,-60) *{\begin{tabular}{c}
$[44,2222,22211]$
\end{tabular}
}},
{\ar (52.5,0);(68,0)},
{\ar (52.5,-20);(68,-20)},
{\ar (52.5,-20);(68,0)},
{\ar (51.5,-60);(69,-60)},
{(80,0) *{\begin{tabular}{c}
$[211,1111,1111]$
\end{tabular}}},
{(80,-20) *{\begin{tabular}{c}
$[221,221,11111]$
\end{tabular}}},
{(80,-60) *{\begin{tabular}{c}
$[222,222,2211]$
\end{tabular}}},
{\ar (92,0);(107.5,10)},
{\ar (92,0);(108,-10)},
{\ar (92,-20);(108,-10)},
{\ar (92,-20);(108,-30)},
{\ar (91,-60);(109,-60)},
{(120,10) *{\begin{tabular}{c}
$[11,11,11,11,11]$
\end{tabular}}},
{(120,-10) *{\begin{tabular}{c}
$[21,21,111,111]$
\end{tabular}}},
{(120,-30) *{\begin{tabular}{c}
$[31,22,22,1111]$
\end{tabular}}},
{(120,-60) *{\begin{tabular}{c}
$[22,22,22,211]$
\end{tabular}}},
\end{xy}
}

%
\end{thm}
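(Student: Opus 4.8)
The plan is to verify the claimed arrows one by one, treating Theorem~\ref{thm:scheme_difference} as an explicit bookkeeping statement about the nine spectral types with three singular points and four accessory parameters, together with the four four-singular-point types that appear on the right edge of the diagram. Since both ``degeneration of linear systems'' operations (confluence of two singular points, and degeneration of an HTL form) are already described in \cite{K1,K2,K3,KNS}, and the equivalence relation $[S]$ is generated by M\"obius transformations, the Laplace transform, and additions, the proof amounts to producing, for each arrow $[S_1]\to[S_2]$ in the diagram, an explicit chain of these moves. So I would first fix normal forms: for each of the nine three-point types, write the Fuchsian system in the gauge where a maximal-multiplicity exponent at each finite singular point has been normalized to $0$ by an addition, exactly as in the worked Example preceding the theorem; and for the four-point types, use the standard Fuchsian normalization. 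This reduces every claim to a finite computation with partitions and with the Laplace-transform rule stated in Section~\ref{sec:Laplace}.

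Next I would organize the arrows into two families. The ``horizontal'' arrows (e.g. $[44,332,11111111]\to[32,11111,11111]$, $[32,11111,11111]\to[211,1111,1111]$, and their analogues in the other two rows) are all of the same type as the Example: confluence of the two singular points carrying the two ``large'' partitions produces a rank-$m$ non-Fuchsian system with $r_\infty=1$, written as $\frac{dY}{dx}=(Q(x-T)^{-1}P+S)Y$; one then normalizes leading exponents to $0$ by additions, applies the Laplace transform $x\mapsto-\frac{d}{dx}$, $\frac{d}{dx}\mapsto x$, and reads off from the Riemann-scheme correspondence of Section~\ref{sec:Laplace} that the result has the target spectral type. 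For each such arrow I would just exhibit the two Riemann schemes and check that the partition data $(\lambda_\nu,\mu_j,n_j)$ transform correctly; this is the content of ``by direct calculation''. The ``branching'' arrows on the right (from $[211,1111,1111]$ to $[11,11,11,11,11]$ and $[21,21,111,111]$, from $[221,221,11111]$ to $[21,21,111,111]$ and $[31,22,22,1111]$, from $[222,222,2211]$ to $[22,22,22,211]$) go from a three-point system to a four-point one, so they must be realized not by confluence but by an \emph{un}-confluence, i.e. by the reverse reading of the same Laplace/addition chain, or directly by recognizing that a three-point system of the given spectral type, after a M\"obius move and a Laplace transform, already carries four singular points with the stated multiplicities. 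I would present each of these as an explicit diagram of the form used in the Example, with the ``$\sim$'' identification marking where Laplace and additions are applied.

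The remaining arrows $[44,332,11111111]\to[32,11111,11111]$ etc.\ that stay within the three-point world are the ones where a degeneration of an HTL form, rather than a confluence, may be the cleaner description; for those I would cite the HTL-degeneration operation of \cite{K1,K2,K3,KNS} and check that it sends the source Riemann scheme to the target one after re-normalizing leading exponents. In all cases the bookkeeping invariant to watch is the number of accessory parameters, which must stay equal to $4$ along every arrow (this is guaranteed because confluence, HTL-degeneration, Laplace, additions, and M\"obius transformations all preserve it), together with the index of rigidity, which the Laplace transform and additions preserve and which confluence/HTL-degeneration can only keep or decrease in the controlled way recorded in the cited classification; checking these two numbers for every node is a useful consistency test that the diagram is complete and that no arrow has been omitted or spuriously added.

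The main obstacle I anticipate is purely combinatorial stamina rather than conceptual difficulty: one must pick, for each of the roughly fifteen arrows, the \emph{correct} pair of singular points to conflu (or the correct HTL stratum to degenerate to) so that the Laplace transform lands on exactly the target partition tuple and not on some equivalent-looking but distinct spectral type, and one must verify that the additions needed to normalize leading exponents do not collide (i.e.\ that a maximal-multiplicity exponent is genuinely available to be set to zero at each relevant point, which uses the standing assumption that eigenvalues of $\Theta$ do not differ by nonzero integers). A secondary subtlety is bookkeeping the ``$[\lambda,\mu]$'' pair-of-partitions data at the irregular point $x=\infty$ of the intermediate non-Fuchsian systems and matching it correctly against $\tilde\Theta_j$ and $\tilde K_j$ in the Laplace rule; getting the padding by zeros ($n-m_j$ zeros added, $m-n_j$ zeros removed) right is where a careless computation would go wrong. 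Once the normal forms are fixed these are all finite checks, so the proof is complete after the arrows are exhibited.
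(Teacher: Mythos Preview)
Your overall plan---verify each arrow by exhibiting a confluence between suitable representatives and then using additions and the Laplace transform to identify the result with the target equivalence class---is precisely what the paper means by ``by direct calculation,'' with the worked Example immediately preceding the theorem serving as the template.

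The genuine gap is in your treatment of the rightmost ``branching'' arrows. You argue that since these go from a three-point label to a four- or five-point label, they ``must be realized not by confluence but by an un-confluence.'' This is a misconception: the relation $[S_1]\to[S_2]$ concerns degeneration between \emph{representatives}, and the Laplace transform can change the number of singular points freely within an equivalence class. The mechanism for these arrows is the same as for all the others, and the paper states it explicitly in the paragraph just after the diagram (and carries it out in Section~\ref{sec:d-Garnier}): confluence of two of the three singular points of the Fuchsian source yields a \emph{two}-point non-Fuchsian system, which is then Laplace-equivalent to the four- or five-point Fuchsian target. Concretely, merging the two $1111$ points of $211,1111,1111$ gives $(1)(1)(1)(1),211\sim 11,11,11,11,11$, while merging $211$ with a $1111$ gives $(11)(1)(1),1111\sim 21,21,111,111$. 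There is no ``un-confluence,'' and your alternative---apply Laplace directly to the three-point Fuchsian system---produces a two-point system, not a four- or five-point one. Two smaller inaccuracies: your ``two large partitions'' recipe for the horizontal arrows is not uniformly correct (in the Example it is $32$ merged with the all-ones $11111$; different outgoing arrows from the same node require different pairs, constrained by the refinement condition that one partition refine the other), and HTL degeneration is not needed anywhere in this particular diagram---every arrow is realized by a confluence of singular points on a Fuchsian representative.
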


\vspace{5mm}

\noindent
Notice that the Fuchsian system of spectral type $55, 3331, 22222$ does not admit confluences of singularities
since any one of the three partitions is not a refinement of the others (see \cite{KNS}).

The reason why we consider such an equivalence relation is
that continuous isomonodromic deformation equations are invariant under the Laplace transform.
A typical example is the Harnad duality~\cite{Har}.
We expect that an analogous statement also holds for discrete isomonodromic deformations.
\begin{conj}
Discrete isomonodromic deformation equations are invariant under the Laplace transform.
\end{conj}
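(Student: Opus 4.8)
The plan is first to pin down what ``invariant'' should mean and then to attack the statement via the Fourier--Laplace transform at the level of meromorphic connections. In the setting of Section~\ref{sec:Laplace}, let $E$ be a system with $r_\infty=1$, written as in (\ref{eq:Laplace_before}) as $dY/dx=(Q(x-T)^{-1}P+S)Y$ with $S,T$ diagonalizable, and let $\hat{E}$ be its Laplace transform (\ref{eq:Laplace_after}). The Riemann-scheme dictionary of Section~\ref{sec:Laplace} matches every characteristic exponent of $E$ canonically with one of $\hat{E}$, keeping track of the zeros added to or removed from $\tilde{K}_j$ and $\tilde{\Theta}_j$. A discrete isomonodromic deformation $\mathcal{S}$ of $E$ shifts a prescribed subcollection of the exponents of $E$ by integers; under the dictionary it must then shift a prescribed subcollection of the exponents of $\hat{E}$ by integers, hence it determines a discrete isomonodromic deformation $\hat{\mathcal{S}}$ of $\hat{E}$. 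The precise claim to be proved is: the system of difference equations describing $\mathcal{S}$ on the entries of $Q$ and $P$ (equivalently on the residues $A_\nu$) agrees, after the Laplace identification of the two phase spaces, with the one describing $\hat{\mathcal{S}}$.

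First I would recall that $\mathcal{F}\colon x\mapsto -\partial_x,\ \partial_x\mapsto x$ is an (anti)involutive autoequivalence of the category of holonomic $\mathbb{C}[x]\langle\partial_x\rangle$-modules, and that on the subcategory of systems with $r_\infty=1$ it realizes exactly the rule $(T,Q,P,S)\mapsto(S,-P,Q,-T)$ underlying Section~\ref{sec:Laplace}. Next I would re-express a Schlesinger transformation not as an abstract exponent shift but as an explicit elementary modification: its multiplier $R(x)$ factors into elementary multipliers, each a rational gauge matrix of a very restricted shape that raises one exponent by one at one singular point and lowers one exponent by one at another. The heart of the argument is to transport such an elementary multiplier through $\mathcal{F}$: one must show that $\mathcal{F}$ sends the compatible pair $\{dY/dx=A(x)Y,\ \overline{Y}=R(x)Y\}$ to a compatible pair $\{dY/dx=\hat{A}(x)Y,\ \widehat{\overline{Y}}=\hat{R}(x)Y\}$ in which $\hat{R}(x)$ is again rational, of the same restricted shape, and effects precisely the exponent shift on $\hat{E}$ predicted by the dictionary. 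Granting this for elementary multipliers, composition and functoriality of $\mathcal{F}$ give the claim for general $\mathcal{S}$, and comparing $\widehat{\overline{A}}$ with $\hat{A}$ against $\overline{\hat{A}}$ with $\hat{A}$ yields the coincidence of the two difference systems.

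The hard part will be precisely this transport of the multiplier through $\mathcal{F}$. The Laplace transform is a global operation, while a Schlesinger multiplier is pinned down by local data at the individual singular points together with a global rationality and gauge-normalization constraint; a rank-one rational modification at a finite point $x=t_j$ has no a priori reason to become a rational modification at a finite point on the dual side. What makes the present case tractable is the symmetry recorded in Section~\ref{sec:Laplace}: the finite singular points $t_j$ of $E$ correspond to the block structure of the HTL form at $\infty$ of $\hat{E}$, and $\infty$ of $E$ to the finite points $s_j$ of $\hat{E}$, so one may hope to compute $\hat{R}$ explicitly in the $(T,Q,P,S)$-coordinates and verify that it is again elementary. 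A second, more technical obstacle is that the Laplace correspondence changes the number and type of singular points, so ``invariant'' can only mean ``equivalent after the induced identification of phase spaces''; one must check that this identification is canonical, that the deformation acts trivially on $S$ and $T$ so the shift is carried entirely by $(Q,P)$, and that the non-standard gauge used in Section~\ref{sec:d-Garnier}, as well as ramified or confluent situations lying outside the diagonalizable setup of Section~\ref{sec:Laplace}, do not break the correspondence. A natural first milestone is to verify the conjecture directly for the Schlesinger transformations of spectral type $211,1111,1111$ computed in Section~\ref{sec:d-Garnier} against those of its Laplace partner $(111)(11),11111$, which would both test the mechanism and tie the conjecture to the worked example of the paper.
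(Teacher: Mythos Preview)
The paper does not prove this statement: it is explicitly labeled a \emph{Conjecture} and left open. The only supporting evidence the paper offers is the analogy with the continuous case (Harnad duality, cited as~\cite{Har}) and the remark that, \emph{if} the conjecture holds, then linear systems in the same equivalence class yield the same Painlev\'e-type difference equation. So there is nothing to compare your argument against on the paper's side.

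As for your proposal itself: it is a coherent research strategy, not a proof, and you are candid about this. The central step---transporting an elementary Schlesinger multiplier $R(x)$ through the Fourier--Laplace functor $\mathcal{F}$ and showing that the result is again a rational multiplier of the correct elementary shape---is exactly the crux, and you do not carry it out. The difficulty you flag is real: $\mathcal{F}$ exchanges local data at finite points with data at $\infty$, so a rank-one rational modification at $x=t_j$ on the $E$-side becomes, on the $\hat{E}$-side, a modification governed by the block structure of the HTL form at $\infty$, and there is no a priori reason the resulting gauge matrix is rational in the dual variable with the same elementary profile. Your appeal to the $(T,Q,P,S)\leftrightarrow(S,-P,Q,-T)$ symmetry is the right handle, but turning it into an actual computation of $\hat{R}$ and checking the required normalization is precisely the open content of the conjecture. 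Your suggested milestone---verifying the statement by direct computation for $211,1111,1111$ versus its Laplace partner $(111)(11),11111$---is sensible and goes beyond what the paper does, but note that even this would only be evidence, not a proof of the general claim.
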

\noindent
If the conjecture is true, then linear systems which are mutually equivalent give
the same Painlev\'e-type difference equation
since M\"obius transformations and additions clearly do not change
discrete isomonodromic deformation equations.
Therefore it is expected that Theorem~\ref{thm:scheme_difference} gives
the degeneration scheme of four-dimensional Painlev\'e-type difference equations.
Then the above scheme shows that the three streams starting from the Garnier system in two variables, the Fuji-Suzuki system, and
the Sasano system come from the same source.

To write down a Painlev\'e-type equation explicitly,
we have to parametrize linear systems which realize a given spectral type.
The following lemma might be useful for that purpose.
\begin{lem}\label{thm:separate}
Consider a linear system
\begin{align}\label{eq:system_given}
\frac{dY}{dx}=\left( \frac{A_0^{(1)}}{x^2}+\frac{A_0^{(0)}}{x}+\tilde{A}(x) \right)Y
\end{align}
where $\tilde{A}(x)$ is rational in $x$ and holomorphic at $x=0$.
Suppose that $A_0^{(1)}$ is diagonalizable and the spectral type at $x=0$ is $[\lambda, \mu]$.
Then there exists a linear system which has regular singular points at $x=0, \, -\varepsilon$
with spectral type $\lambda$, $\mu$ respectively and
tends to the system~(\ref{eq:system_given}) as $\varepsilon \to 0$.
\end{lem}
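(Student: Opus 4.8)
The plan is to realize the required system as an \emph{unfolding} of the rank-one irregular singularity at $x=0$: I split the double pole into two simple poles, one at $x=0$ and one at $x=-\varepsilon$, that coalesce as $\varepsilon\to0$. This is the inverse operation to a confluence of two singular points.

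\emph{Normalization.} After a constant gauge transformation I may assume $A_0^{(1)}=a_1I_{m_1}\oplus\cdots\oplus a_kI_{m_k}$ with the $a_i$ pairwise distinct, where $\lambda=(m_1,\dots,m_k)$; and, using the conjugation action of $\mathrm{Stab}(A_0^{(1)})$ as in Section~\ref{sec:RS}, that the block-diagonal part of $A_0^{(0)}$ with respect to this decomposition is $\Theta^0=\Theta_1^0\oplus\cdots\oplus\Theta_k^0$ with each $\Theta_i^0$ diagonal whose diagonal entries have the multiplicities prescribed by $\mu_i$, so that $\mu$ is obtained by merging $\mu_1,\dots,\mu_k$. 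Write $A_0^{(0)}=\Theta^0+N$ with $N$ the off-block-diagonal part, and let $H$ be the off-block-diagonal matrix with $[H,A_0^{(1)}]=N$; it exists and is unique because $\mathrm{ad}_{A_0^{(1)}}$ is invertible on off-block-diagonal matrices ($H_{ij}=N_{ij}/(a_j-a_i)$ for $i\ne j$).

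\emph{The ansatz.} I would then consider the family
\[
\frac{B_0(\varepsilon)}{x}+\frac{B_{-\varepsilon}(\varepsilon)}{x+\varepsilon}+\tilde A(x),\qquad
B_0(\varepsilon)=(I+\varepsilon H)\,\tfrac1\varepsilon A_0^{(1)}\,(I+\varepsilon H)^{-1},\qquad
B_{-\varepsilon}(\varepsilon)=-\tfrac1\varepsilon A_0^{(1)}+\Theta^0 .
\]
Writing $B_0(\varepsilon)=\tfrac1\varepsilon A_0^{(1)}+C(\varepsilon)$, one checks that $C(\varepsilon)$ is holomorphic in $\varepsilon$ near $0$ with $C(0)=[H,A_0^{(1)}]=N$; combined with the elementary identity $\tfrac1\varepsilon A_0^{(1)}\bigl(\tfrac1x-\tfrac1{x+\varepsilon}\bigr)=A_0^{(1)}/\bigl(x(x+\varepsilon)\bigr)$, this shows that the coefficient matrix converges, away from $x=0$, to $A_0^{(1)}/x^2+(N+\Theta^0)/x+\tilde A(x)$, i.e.\ to the system~(\ref{eq:system_given}). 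Since $\tilde A(x)$ is untouched and holomorphic at $x=0$ and (for $\varepsilon$ small) at $x=-\varepsilon$, the new system has regular singular points there. For their spectral types: $B_0(\varepsilon)$ is conjugate to $\tfrac1\varepsilon A_0^{(1)}$, whose eigenvalues $a_i/\varepsilon$ are distinct for small $\varepsilon$ with multiplicities $m_i$, so $B_0(\varepsilon)$ is diagonalizable of spectral type $\lambda$; and $B_{-\varepsilon}(\varepsilon)=\bigoplus_i(-\tfrac{a_i}\varepsilon I_{m_i}+\Theta_i^0)$ is literally block-diagonal with diagonal blocks, so for $\varepsilon$ small its eigenvalues are distinct across blocks and carry exactly the multiplicities prescribed by the $\mu_i$, whence it is diagonalizable of spectral type $\mu$. (As usual for confluences, the exponents at $x=0$ and $x=-\varepsilon$ diverge as $\varepsilon\to0$.)

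\emph{The main obstacle.} The only genuinely delicate point is matching both spectral types simultaneously, and this is exactly what dictates the asymmetric shape of the ansatz. The off-block-diagonal part $N$ of $A_0^{(0)}$ must resurface in the order-$\varepsilon^0$ term of $B_0(\varepsilon)$ or of $B_{-\varepsilon}(\varepsilon)$; if it is left there as an honest perturbation of a matrix whose eigenvalue clusters have size larger than one, then second- and higher-order perturbation theory generically splits the multiplicities (the $m_i$ on the $\lambda$-side, or the parts $\mu_{i,s}>1$ on the $\mu$-side), and the spectral type degenerates to a strict refinement. The remedy used above is to hide $N$ inside the conjugation $I+\varepsilon H$ acting on the residue that is forced to be scalar-block --- where there is no nontrivial multiplicity to be split --- while keeping the other residue genuinely block-diagonal. (If one only wants the limiting system up to gauge equivalence, a symmetric split also works; the ansatz above reaches~(\ref{eq:system_given}) on the nose.)
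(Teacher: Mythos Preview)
Your proof is correct, but the construction differs from the paper's. The paper splits $A_0^{(0)}$ additively into its strictly-upper block-triangular part and its lower block-triangular part (including the diagonal blocks $\Theta_j$): the residue at $x=0$ is taken to be block upper triangular with diagonal blocks $\rho_j I_{m_j}=\tfrac{t_j}{\varepsilon}I_{m_j}$ and the off-diagonal $A_{ij}$ ($i<j$) as the upper blocks, while the residue at $x=-\varepsilon$ is block lower triangular with diagonal blocks $\Theta_j-\tfrac{t_j}{\varepsilon}I_{m_j}$ and the $A_{ij}$ ($i>j$) below. The identity $\tfrac{A_0}{x}+\tfrac{A_1}{x+\varepsilon}=\tfrac{\varepsilon A_0}{x(x+\varepsilon)}+\tfrac{A_0+A_1}{x+\varepsilon}$ then gives the limit immediately, with no auxiliary matrix $H$ or conjugation needed.

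Your approach instead hides the entire off-block-diagonal part $N$ inside a conjugation $I+\varepsilon H$ of the scalar-block residue, leaving $B_{-\varepsilon}$ literally diagonal. This makes the spectral-type verification slightly more transparent (one residue is explicitly conjugate to a diagonal matrix, the other \emph{is} diagonal), at the price of introducing $H$ and a power-series argument for $C(\varepsilon)$. The paper's version is algebraically more elementary and, crucially for what follows, produces residues in the upper/lower block-triangular gauge that is used throughout Section~\ref{sec:d-Garnier} (cf.\ Proposition~\ref{thm:LUgauge}). So the paper's choice is tailored to the subsequent Schlesinger-transformation computations, whereas yours is a clean standalone existence argument.
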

\begin{proof}
By a change of the dependent variable, we can assume that $A_0^{(1)}$ is diagonal:
\[
A_0^{(1)}=t_1 I_{m_1} \oplus \cdots \oplus t_k I_{m_k}.
\]
Further, using the action of $\mathrm{Stab}(A_0^{(1)})$, we can assume that $A_0^{(0)}$ has the form
\begin{equation}
A_0^{(0)}=
\begin{pmatrix}
\Theta_1 & A_{12} & \ldots & A_{1k} \\
A_{21} & \Theta_2 & \ldots & A_{2k} \\
\vdots & \vdots & \ddots & \vdots \\
A_{k1} & A_{k2} & \ldots & \Theta_k
\end{pmatrix}
\end{equation}
where $\Theta_j$ is an $m_j \times m_j$ diagonal matrix, $A_{ij}$ is an $m_i \times m_j$ matrix.

We define an upper triangular matrix $A_0$ and a lower triangular matrix $A_1$ as follows:
\begin{align*}
A_0=
\begin{pmatrix}
\rho_1 I_{m_1} & A_{12} &\ldots &A_{1k} \\
O & \rho_2 I_{m_2} & & A_{2k}\\
\vdots & & \ddots & \vdots \\
O & O & \ldots & \rho_k I_{m_k}
\end{pmatrix}, \quad
A_1=
\begin{pmatrix}
S_1 & O & \ldots & O \\
A_{21} & S_2 & & O \\
\vdots & & \ddots & \vdots \\
A_{k1} & A_{k2} & \ldots & S_k
\end{pmatrix}.
\end{align*}
Here $\rho_j=t_j/\varepsilon$ and $S_j=\Theta_j-(t_j/\varepsilon)I_{m_j}$.
Then we consider the following system
\begin{equation}\label{eq:separated}
\frac{dY}{dx}=
\left( \frac{A_0}{x}+\frac{A_1}{x+\varepsilon}+\tilde{A}(x) \right)Y.
\end{equation}
Noting that
\[
\frac{A_0}{x}+\frac{A_1}{x+\varepsilon}=\frac{\varepsilon A_0}{x(x+\varepsilon)}+\frac{A_0+A_1}{x+\varepsilon},
\]
we see that the system~(\ref{eq:separated}) is a desired one since it tends to (\ref{eq:system_given}) as $\varepsilon$ tends to zero.
\end{proof}
\begin{rem}
This upper-lower triangular gauge
appears naturally in the study of integrable systems.
For example, concerning isomonodromic deformations, see~\cite{H, MT, Sak2}.
\qed
\end{rem}

\begin{eg}
We can parametrize linear systems of spectral type $111,111,111$ using a parametrization of
$11,11,11,11$-systems.
Linear systems of spectral type $11,11,11,11$ are parametrized as follows~\cite{Sak2}
(here the choice of the gauge is slightly different from that in \cite{Sak2}, 
and we omit the gauge parameter):
\begin{align}\label{eq:11,11,11,11}
\frac{dY}{dx}=Q(x-T)^{-1}PY
\end{align}
where
\begin{align*}
T=\mathrm{diag}(t,1,0), \quad
Q=
\begin{pmatrix}
1 & 1 & 1 \\
tp & pq-\theta^\infty_2 & 0
\end{pmatrix}, \quad
P=
\begin{pmatrix}
\theta^t+pq & -q/t \\
\theta^1+\theta^\infty_2-pq & 1 \\
\theta^0 & \frac{q}{t}-1
\end{pmatrix}.
\end{align*}
Its Riemann scheme is
\[
\left(
\begin{array}{cccc}
  x=0 & x=1 & x=t  & x=\infty \\
\begin{array}{c} 0 \\ \theta^0 \end{array}
& \begin{array}{c} 0 \\ \theta^1  \end{array}
& \begin{array}{c} 0 \\ \theta^t  \end{array}
& \begin{array}{c} \theta^\infty_1 \\ \theta^\infty_2 \end{array}
\end{array}
\right).
\]
The Painlev\'e-type differential equation corresponding to (\ref{eq:11,11,11,11}) is the sixth Painlev\'e equation.
Applying the Laplace transform and a M\"obius transformation ($x \mapsto 1/x$)
to (\ref{eq:11,11,11,11}), we have
\begin{align}\label{eq:(1)(1)(1),111}
\frac{dY}{dx}=\left( \frac{T}{x^2}+\frac{PQ}{x} \right)Y.
\end{align}
It is easy to see that the spectral type of the system~(\ref{eq:(1)(1)(1),111}) is $(1)(1)(1), 111$.

By virtue of Lemma~\ref{thm:separate}, we can construct a Fuchsian system which gives
the system~(\ref{eq:(1)(1)(1),111}) through the confluence procedure:
\begin{align}\label{eq:111,111,111}
\frac{dY}{dx}=
\left( \frac{A_0}{x}+\frac{A_1}{x+\varepsilon} \right)Y
\end{align}
where
\begin{align*}
A_0=
\begin{pmatrix}
\rho_t & (PQ)_{12} & (PQ)_{13} \\
0 & \rho_1 & (PQ)_{23} \\
0 & 0 & 0
\end{pmatrix}, \quad
A_1=
\begin{pmatrix}
\sigma_t & 0 & 0 \\
(PQ)_{21} & \sigma_1 & 0 \\
(PQ)_{31} & (PQ)_{32} & \sigma_0
\end{pmatrix}.
\end{align*}
Then the Riemann scheme of the system~(\ref{eq:111,111,111}) is
\begin{equation}\label{eq:RS111,111,111}
\left(
\begin{array}{ccc}
  x=0 &x=-\varepsilon & x=\infty \\
\begin{array}{c} \rho_t \\ \rho_1 \\ 0 \end{array}
& \begin{array}{c} \sigma_t \\ \sigma_1 \\ \sigma_0  \end{array}
& \begin{array}{c} \theta^\infty_1 \\ \theta^\infty_2 \\ 0  \end{array}
\end{array}
\right),
\end{equation}
and the spectral type of which is $111,111,111$.
The relation of the exponents between (\ref{eq:(1)(1)(1),111}) and (\ref{eq:111,111,111}) is given by
\begin{align*}
\rho_t=\frac{t}{\varepsilon},
\ \ \rho_1=\frac{1}{\varepsilon},
\ \ \sigma_t=\theta^t-\frac{t}{\varepsilon},
\ \ \sigma_1=\theta^1-\frac{1}{\varepsilon},
\ \ \sigma_0=\theta^0.
\end{align*}

Conversely, we can show by direct calculation that a linear system with the Riemann scheme~(\ref{eq:RS111,111,111})
can be written as (\ref{eq:111,111,111}).
Thus (\ref{eq:111,111,111}) gives a parametrization of linear systems of
spectral type $111,111,111$.

Similarly, using the parametrization of $111,111,111$-systems,
we can parametrize linear systems of spectral type $22,1111,1111$,
and using the parametrization of $22,1111,1111$,
we can obtain a parametrization of linear systems of spectral type $33,222,111111$.
\qed
\end{eg}

\section{Discrete analogue of the Garnier system}\label{sec:d-Garnier}
By considering Schlesinger transformations of Fuchsian equations which appear in Theorem~\ref{thm:oshima} (especially the ones with three singular points),
we can obtain four-dimensional Painlev\'e-type difference equations.
In this section, as an example, we calculate Schlesinger transformations of the Fuchsian system of spectral type $211,1111,1111$.
The confluence of two singular points of this Fuchsian system represented by $1111$ and $1111$ leads to the spectral type $(1)(1)(1)(1), 211$.
This is equivalent to $11,11,11,11,11$, which corresponds to the Garnier system in two variables.
On the other hand, the confluence of $211$ and $1111$ leads to $(11)(1)(1), 1111$.
This is equivalent to $21,21,111,111$, which corresponds to the Fuji-Suzuki system.
Here we consider the Schlesinger transformations corresponding to discretizations of
the two time evolutions of the Garnier system.

First we note the following proposition, 
which can be verified by direct calculation.
\begin{prop}\label{thm:LUgauge}
Let $A_0$ and $A_1$ be the following square matrices:
\begin{align*}
A_0=
\begin{pmatrix}
\lambda & {}^t\bm{0}_{m-1} \\
\bm{b} & B
\end{pmatrix}, \quad
A_1=
\begin{pmatrix}
\mu & {}^t\!\bm{c} \\
\bm{0}_{m-1} & C
\end{pmatrix}
\end{align*}
where $\lambda, \mu \in \mathbb{C}$, $\bm{b}, \bm{c} \in \mathbb{C}^{m-1}$, $B, C \in M(m-1, \mathbb{C})$,
with $B$ being upper triangular and $C$ lower triangular.
We have denoted by $\bm{0}_k$ the zero vector ${}^t\!(\overbrace{0, \ldots, 0}^{k})$.
Assume that $\lambda$ (resp. $\mu$) is not an eigenvalue of $B$ (resp. $C$).
Define the square matrix $G$ as
\begin{align*}
G&=
\begin{pmatrix}
l_1^{-1} & {}^t\bm{0} \\
\bm{0} & U_{22}
\end{pmatrix}
\begin{pmatrix}
1 & {}^t\!\bm{c}(\mu-C)^{-1} \\
-(\lambda-B)^{-1}\bm{b} & I_{m-1}
\end{pmatrix}.
\end{align*}
Here $l_1 \in \mathbb{C}^{\times}$, $U_{22} \in \mathrm{GL}(m-1, \mathbb{C})$ are determined by the
following LU decomposition
\begin{align}\label{eq:LUdec}
\begin{pmatrix}
1 & {}^t\!\bm{c}(\mu-C)^{-1} \\
\bm{0} & I_{m-1}
\end{pmatrix}
\begin{pmatrix}
1 & {}^t\bm{0} \\
(\lambda-B)^{-1}\bm{b} & I_{m-1}
\end{pmatrix}
=
\begin{pmatrix}
l_1 & {}^t\bm{0} \\
\bm{l} & L_{22}
\end{pmatrix}
\begin{pmatrix}
u_1 & {}^t\bm{u} \\
\bm{0} & U_{22}
\end{pmatrix},
\end{align}
where $U_{22}$ is upper triangular and $L_{22}$ lower triangular.
Then $GA_0G^{-1}$ is upper triangular and $GA_1G^{-1}$ is lower triangular.
More explicitly, we have
\begin{align*}
GA_0G^{-1}&=
\begin{pmatrix}
\lambda & -l_1^{-1}{}^t\!\bm{c}(\mu-C)^{-1}(\lambda-B)U_{22}^{-1} \\
\bm{0}_{m-1} & U_{22}BU_{22}^{-1}
\end{pmatrix}, \\
GA_1G^{-1}&=
\begin{pmatrix}
\mu & {}^t\bm{0}_{m-1} \\
-u_1^{-1}L_{22}^{-1}(\mu-C)(\lambda-B)^{-1}\bm{b} & L_{22}^{-1}CL_{22}
\end{pmatrix}.
\end{align*}
\end{prop}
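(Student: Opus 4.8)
The statement is purely a matter of linear algebra, so the plan is to verify it by an explicit block computation. First I would record the two key factored matrices
\[
N_0=\begin{pmatrix} 1 & {}^t\bm{0} \\ -(\lambda-B)^{-1}\bm{b} & I_{m-1} \end{pmatrix}, \qquad
N_1=\begin{pmatrix} 1 & {}^t\!\bm{c}(\mu-C)^{-1} \\ \bm{0} & I_{m-1} \end{pmatrix},
\]
so that $G=D\,N_1N_0$ with $D=\mathrm{diag}(l_1^{-1},U_{22})$. The crucial observation is that $N_0$ is tailored to kill the lower-left block of $A_0$: a direct computation gives $N_0A_0N_0^{-1}$ upper triangular (the $(2,1)$ block becomes $\bm{b}+(\text{stuff})-(\lambda-B)^{-1}\bm{b}\cdot\lambda$... more precisely one checks $N_0A_0N_0^{-1}=\begin{pmatrix}\lambda & {}^t\bm{0}\\ \bm{0}& B\end{pmatrix}$ up to the upper-right entry, using that $B$ is already upper triangular). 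Symmetrically, $N_1A_1N_1^{-1}$ is lower triangular because $N_1$ is designed to clear the upper-right block of $A_1$ and $C$ is already lower triangular. So the conjugation by $N_0$ handles $A_0$ and the conjugation by $N_1$ handles $A_1$; the content of the proposition is that a \emph{single} matrix $G$ does both at once, and that is exactly what the LU decomposition (\ref{eq:LUdec}) arranges: $N_1N_0 = \begin{pmatrix} l_1 & {}^t\bm{0}\\ \bm{l} & L_{22}\end{pmatrix}\begin{pmatrix} u_1 & {}^t\bm{u}\\ \bm{0} & U_{22}\end{pmatrix}=:LU$.

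Given this, the two assertions follow by peeling off one triangular factor at a time. For $A_0$: write $G A_0 G^{-1}=D(N_1N_0)A_0(N_1N_0)^{-1}D^{-1}=D\,L\,(U N_0 A_0 N_0^{-1} U^{-1})\,L^{-1}D^{-1}$. Wait—more cleanly, since $N_1N_0=LU$, I would instead use $GA_0G^{-1}=D\,L\,U\,A_0\,U^{-1}L^{-1}D^{-1}$ only after first absorbing $N_0$; so the better route is $GA_0G^{-1}=(DN_1)\,(N_0A_0N_0^{-1})\,(DN_1)^{-1}$, and since $N_0A_0N_0^{-1}$ is upper triangular while $DN_1$ is upper triangular (product of upper triangular $N_1$ and diagonal-block $D$... but $U_{22}$ is the upper triangular factor, so $DN_1$ is upper triangular), the conjugate of an upper triangular matrix by an upper triangular matrix is upper triangular. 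Dually, $GA_1G^{-1}=(DN_1N_0)A_1(DN_1N_0)^{-1}$; using $N_1N_0=LU$ one has $G=DLU$, hence $GA_1G^{-1}=(DL)\,\bigl(U A_1 U^{-1}\bigr)\,(DL)^{-1}$, and I would check $UA_1U^{-1}$ is lower triangular by observing that $U=\begin{pmatrix}u_1&{}^t\bm u\\ \bm 0& U_{22}\end{pmatrix}$ conjugates $A_1$ the same way $N_1$ does up to the already-lower-triangular block $C$—actually the slick statement is $N_1^{-1}(DL)^{-1}G = U$, i.e.\ $G=DL\cdot U\cdot$, so one reduces to showing $U A_1 U^{-1}$ lower triangular, which holds because $U$ differs from $N_1$ by right multiplication by $N_0^{-1}L^{-1}$... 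I would organize this as: $G A_1 G^{-1}$, conjugation by lower-triangular-times-block-diagonal $DL$ preserves lower triangularity, so it suffices that $U A_1 U^{-1}$ is lower triangular, and that is a one-line block computation using $U_{22}$ upper triangular and $C$ lower triangular.

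Finally I would extract the explicit formulas for the $(1,2)$ entry of $GA_0G^{-1}$ and the $(2,1)$ entry of $GA_1G^{-1}$ by tracking the corner blocks through these conjugations: the upper-right block of $N_0A_0N_0^{-1}$ is $-{}^t\!\bm c(\mu-C)^{-1}(\lambda-B)$ times the appropriate factor coming from $N_1$, and then conjugating by $D$ introduces the $l_1^{-1}$ and $U_{22}^{-1}$; dually for the lower-left block one picks up $u_1^{-1}$ and $L_{22}^{-1}$. The lower diagonal blocks are visibly $U_{22}BU_{22}^{-1}$ and $L_{22}^{-1}CL_{22}$ from the structure of $DN_1$ and $DL$. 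The only place requiring care—the \emph{main obstacle}—is the bookkeeping of which triangular factor ($U_{22}$ vs.\ $L_{22}$, $l_1$ vs.\ $u_1$) lands on which side in the corner entries; everything else is a mechanical $2\times2$-block verification that is guaranteed to close because the hypotheses ($\lambda\notin\mathrm{Spec}\,B$, $\mu\notin\mathrm{Spec}\,C$) make $N_0,N_1$ invertible and the LU decomposition (\ref{eq:LUdec}) well defined.
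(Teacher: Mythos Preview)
The paper gives no detailed argument --- it only says the proposition ``can be verified by direct calculation'' --- so your structural approach via $N_0,N_1$ is already more informative than what is in the paper, and your key observations $N_0A_0N_0^{-1}=\mathrm{diag}(\lambda,B)$ and $N_1A_1N_1^{-1}=\mathrm{diag}(\mu,C)$ are correct and exactly the right starting point. However, the factorization $G=D\,N_1N_0$ on which you build is \emph{false}: multiplying out $N_1N_0$ gives $(1,1)$ entry $1-{}^t\!\bm{c}(\mu-C)^{-1}(\lambda-B)^{-1}\bm{b}$, not $1$, so the second factor of $G$ in the statement is \emph{not} $N_1N_0$. This breaks the step ``$GA_0G^{-1}=(DN_1)(N_0A_0N_0^{-1})(DN_1)^{-1}$.'' Your treatment of $A_1$ has independent problems: $DL$ has $(2,2)$ block $U_{22}L_{22}$ and hence is not lower triangular, so conjugation by it does not preserve lower triangularity; and the fallback claim that $UA_1U^{-1}$ is lower triangular is also false (its $(1,2)$ block equals $(u_1-l_1^{-1})\,{}^t\!\bm{c}\,U_{22}^{-1}$, generically nonzero).

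The repair is short and makes your strategy work. From the LU relations $l_1\,{}^t\bm{u}={}^t\!\bm{c}(\mu-C)^{-1}$, $u_1\,\bm{l}=(\lambda-B)^{-1}\bm{b}$ and $l_1u_1=1+{}^t\!\bm{c}(\mu-C)^{-1}(\lambda-B)^{-1}\bm{b}$ one checks entrywise that
\[
G \;=\; U\,N_0 \;=\; L^{-1}\,N_1
\]
(the only nontrivial entry is $(1,1)$, where one needs $u_1-{}^t\bm{u}(\lambda-B)^{-1}\bm{b}=l_1^{-1}$, equivalent to $l_1u_1(1-{}^t\bm{u}\,\bm{l})=1$). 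Then immediately
\[
GA_0G^{-1}=U\,\mathrm{diag}(\lambda,B)\,U^{-1},\qquad
GA_1G^{-1}=L^{-1}\,\mathrm{diag}(\mu,C)\,L,
\]
which are upper and lower triangular respectively, and reading off the off-diagonal blocks with ${}^t\bm{u}=l_1^{-1}{}^t\!\bm{c}(\mu-C)^{-1}$ and $\bm{l}=u_1^{-1}(\lambda-B)^{-1}\bm{b}$ gives exactly the stated formulas.
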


\begin{rem}
We do not assume a particular normalization of the LU decomposition~(\ref{eq:LUdec}),
thus $l_1, L_{22}$, etc., are not unique
(see Remark~\ref{rem:uniqueness}).
\qed
\end{rem}

We parametrize linear systems of spectral type $211,1111,1111$.
Here we utilize a parametrization of linear systems of spectral type $11,11,11,11,11$~\cite{Sak2},
\begin{equation}\label{eq:11,11,11,11,11}
\frac{dY}{dx}=Q(x-T)^{-1}PY
\end{equation}
where
\begin{align*}
&T=\mathrm{diag}(t_1, t_2, 1, 0), \quad
U=\mathrm{diag}(1,u), \quad
W=\mathrm{diag}(w_1, w_2, w_3, w_4), \\
&Q=U^{-1}\hat{Q}W, \quad
\hat{Q}=
\begin{pmatrix}
1 & 1 & 1 & 1 \\
t_1p_1 & t_2p_2 & p_1q_1+p_2q_2-\theta^\infty_2 & 0
\end{pmatrix}, \\
&P=W^{-1}\hat{P}U, \quad
\hat{P}=
\begin{pmatrix}
\theta^{t_1}+p_1q_1 & -q_1/t_1 \\
\theta^{t_2}+p_2q_2 & -q_2/t_2 \\
\theta^1+\theta^\infty_2-p_1q_1-p_2q_2 & 1 \\
\theta^0 & \frac{q_1}{t_1}+\frac{q_2}{t_2}-1
\end{pmatrix}.
\end{align*}
Its Riemann scheme is
\[
\left(
\begin{array}{ccccc}
  x=0 & x=1 & x=t_1 & x=t_2 & x=\infty \\
\begin{array}{c} 0 \\ \theta^0 \end{array}
& \begin{array}{c} 0 \\ \theta^1  \end{array}
& \begin{array}{c} 0 \\ \theta^{t_1}  \end{array}
& \begin{array}{c} 0 \\ \theta^{t_2}  \end{array}
& \begin{array}{c} \theta^\infty_1 \\ \theta^\infty_2 \end{array}
\end{array}
\right).
\]
Applying the Laplace transform and a M\"obius transformation ($x \mapsto 1/x$) to (\ref{eq:11,11,11,11,11}),
we obtain
\begin{equation}\label{eq:(1)(1)(1)(1),211}
\frac{dY}{dx}=\left( \frac{T}{x^2}+\frac{PQ}{x} \right)Y.
\end{equation}
Then the Riemann scheme of (\ref{eq:(1)(1)(1)(1),211}) is
\[
\left(
\begin{array}{cc}
  x=0 & x=\infty \\
\overbrace{\begin{array}{cc}
     t_1 & \theta^{t_1} \\
     t_2 & \theta^{t_2} \\
    1    & \theta^1 \\
    0    & \theta^0 \\
        	\end{array}} & \begin{array}{c} \theta^\infty_1 \\ \theta^\infty_2 \\ 0 \\ 0 \end{array}
\end{array}
\right).
\]
The system (\ref{eq:11,11,11,11,11}) (and (\ref{eq:(1)(1)(1)(1),211})) has two deformation parameters $t_1, t_2$.
The (continuous) isomonodromic deformation of these systems is governed by the Garnier system in two variables:
\[
\frac{\partial q_i}{\partial t_j}=\frac{\partial H_j}{\partial p_i}, \quad
\frac{\partial p_i}{\partial t_j}=-\frac{\partial H_j}{\partial q_i} \quad (i,j=1,2)
\]
where the Hamiltonians are given by
\begin{align*}
 &t_i(t_i-1)H_i
\left({\theta^0, \theta^1  \atop \theta^{t_1}, \theta^{t_2}, \theta^\infty_2}
;{t_1\atop t_2};{q_1,p_1 \atop q_2,p_2}\right)\\
&=t_i(t_i-1)H_{\rm VI}\left({\theta^\infty_2, \theta^1 \atop
\theta^{t_i}, \theta^0+\theta^{t_{i+1}}+1} ; t_i ; q_i,p_i\right)
+(2q_ip_i+q_{i+1}p_{i+1}-\theta^1-2\theta^\infty_2)q_1q_2p_{i+1}\nonumber\\
&\quad-\frac{1}{t_i-t_{i+1}}
\{ t_i(t_i-1)(p_iq_i+\theta^{t_i})p_iq_{i+1}-t_i(t_{i+1}-1)(2p_iq_i+\theta^{t_i})p_{i+1}q_{i+1}\nonumber\\
&\quad+t_{i+1}(t_i-1)({p_{i+1}}^2q_{i+1}+\theta^{t_{i+1}}(p_{i+1}-p_i))q_i\}
\qquad (i\in \mathbb{Z}/2\mathbb{Z}).\nonumber
\end{align*}
Here $H_\mathrm{VI}$ stands for the Hamiltonian for the sixth Painlev\'e equation (see~\cite{KNS}).

By virtue of Lemma~\ref{thm:separate}, there exists a Fuchsian system
which gives the system~(\ref{eq:(1)(1)(1)(1),211}) through the confluence procedure:
\begin{align}\label{eq:211,1111,1111}
\frac{dY}{dx}=\left( \frac{A_0}{x}+\frac{A_1}{x+\varepsilon} \right)Y
\end{align}
where
\begin{align*}
A_0=
\begin{pmatrix}
\rho_{t_1} & (PQ)_{12} & (PQ)_{13} & (PQ)_{14} \\
0 & \rho_{t_2} & (PQ)_{23} & (PQ)_{24} \\
0 & 0 & \rho_1 & (PQ)_{34} \\
0 & 0 & 0 & 0
\end{pmatrix}, \quad
A_1=
\begin{pmatrix}
\sigma_1 & 0 & 0 & 0 \\
(PQ)_{21} & \sigma_2 & 0 & 0 \\
(PQ)_{31} & (PQ)_{32} & \sigma_3 & 0 \\
(PQ)_{41} & (PQ)_{42} & (PQ)_{43} & \sigma_4
\end{pmatrix}.
\end{align*}
Its Riemann scheme is
\[
\left(
\begin{array}{ccc}
  x=0 &x=-\varepsilon & x=\infty \\
\begin{array}{c} \rho_{t_1} \\ \rho_{t_2} \\ \rho_1 \\ 0 \end{array}
& \begin{array}{c} \sigma_1 \\ \sigma_2 \\ \sigma_3 \\ \sigma_4 \end{array}
& \begin{array}{c} \theta^\infty_1 \\ \theta^\infty_2 \\ 0 \\ 0 \end{array}
\end{array}
\right).
\]
This gives a parametrization of systems of spectral type $211,1111,1111$.
The relation of the exponents between (\ref{eq:(1)(1)(1)(1),211}) and (\ref{eq:211,1111,1111}) is given by
\begin{align*}
\rho_{t_j}=\frac{t_j}{\varepsilon} \ (j=1,2), \ \ 
\rho_1=\frac{1}{\varepsilon}, \ \ 
\sigma_j=\theta^{t_j}-\frac{t_j}{\varepsilon} \ (j=1,2), \ \ 
\sigma_3=\theta^1-\frac{1}{\varepsilon}, \ \ 
\sigma_4=\theta^0.
\end{align*}
Now we consider the Schlesinger transformations $S_1$ and $S_2$
\begin{align*}
S_1: \rho_{t_1} \mapsto \rho_{t_1}+1, \ \sigma_1 \mapsto \sigma_1-1, \quad
S_2: \rho_{t_2} \mapsto \rho_{t_2}+1, \ \sigma_2 \mapsto \sigma_2-1.
\end{align*}
Thus $S_j$ corresponds to the shift $t_j \mapsto t_j + \varepsilon$.

The multiplier $R_1$ for $S_1$ is given as follows:
\begin{align*}
R_1&=G_1\left( I_4+\frac{-\varepsilon E_1}{x+\varepsilon} \right), \quad
G_1=\hat{G}_1W, \quad E_1=\mathrm{diag}(1,0,0,0), \\
\hat{G}_1&=
\begin{pmatrix}
{l_1}^{-1} & {}^t \bm{0}_3 \\
\bm{0}_3 & U_1
\end{pmatrix}
\begin{pmatrix}
1 & {}^t\!\hat{\bm{c}}(\sigma_1-1-\hat{C})^{-1} \\
-(\rho_{t_1}+1-\hat{B})^{-1}\hat{\bm{b}} & I_3
\end{pmatrix},
\end{align*}
where
\begin{align*}
&{}^t\hat{\bm{b}}=\left( (\hat{P}\hat{Q})_{21}, (\hat{P}\hat{Q})_{31}, (\hat{P}\hat{Q})_{41} \right), \quad
{}^t\!\hat{\bm{c}}=\left( (\hat{P}\hat{Q})_{12}, (\hat{P}\hat{Q})_{13}, (\hat{P}\hat{Q})_{14} \right), \\
&\hat{B}=
\begin{pmatrix}
\rho_{t_2} & (\hat{P}\hat{Q})_{23} & (\hat{P}\hat{Q})_{24} \\
0 & \rho_1 & (\hat{P}\hat{Q})_{34} \\
0 & 0 & 0
\end{pmatrix}, \quad
\hat{C}=
\begin{pmatrix}
\sigma_2 & 0 & 0 \\
(\hat{P}\hat{Q})_{32} & \sigma_3 & 0 \\
(\hat{P}\hat{Q})_{42} & (\hat{P}\hat{Q})_{43} & \sigma_4
\end{pmatrix},
\end{align*}
and $l_1 \in \mathbb{C}^{\times}$, $U_1 \in \mathrm{GL}(3, \mathbb{C})$ are determined by the LU decomposition 
\begin{align*}
\begin{pmatrix}
1 & {}^t\!\hat{\bm{c}}(\sigma_1-1-\hat{C})^{-1} \\
\bm{0} & I_3
\end{pmatrix}
\begin{pmatrix}
1 & {}^t\bm{0} \\
(\rho_{t_1}+1-\hat{B})^{-1}\hat{\bm{b}} & I_3
\end{pmatrix}
=
\begin{pmatrix}
l_1 & {}^t\bm{0} \\
\bm{l}_1 & L_1
\end{pmatrix}
\begin{pmatrix}
u_1 & {}^t\bm{u}_1 \\
\bm{0} & U_1
\end{pmatrix}.
\end{align*}
We define the matrices $\overline{A}_0$, $\overline{A}_1$ by
\begin{align*}
\frac{\overline{A}_0}{x}+\frac{\overline{A}_1}{x+\varepsilon}&:=
R_1 \left[ \frac{A_0}{x}+\frac{A_1}{x+\varepsilon} \right]
=G_1\left( \frac{\tilde{A}_0}{x}+\frac{\tilde{A}_1}{x+\varepsilon} \right)G_1^{-1},
\end{align*}
where
\begin{align*}
\tilde{A}_0=
W^{-1}
\begin{pmatrix}
\rho_{t_1}+1 & {}^t\bm{0}_3 \\
\hat{\bm{b}} & \hat{B}
\end{pmatrix}
W, \quad
\tilde{A}_1=
W^{-1}
\begin{pmatrix}
\sigma_1-1 & {}^t\hat{\bm{c}} \\
\bm{0}_3 & \hat{C}
\end{pmatrix}
W.
\end{align*}
Notice that the desired shift of characteristic exponents is achieved.
Obviously we have $\tilde{A}_0+\tilde{A}_1=A_0+A_1$.
Proposition~\ref{thm:LUgauge} guarantees that $\overline{A}_0$ and $\overline{A}_1$ are again upper triangular and lower triangular, respectively.
Thus, by comparing $\overline{A}_0$, $\overline{A}_1$ with $A_0$, $A_1$,
we can obtain the discrete time evolution of $q_i, p_i, w_i$'s (which we denote by $\overline{q}_i, \overline{p}_i, \overline{w}_i$) along the $S_1$-direction.
To see this, we consider
the time evolution of $PQ$:
\begin{align*}
\overline{P} \, \overline{Q}=\overline{A}_0+\overline{A}_1=G_1(\tilde{A}_0+\tilde{A}_1)G_1^{-1}
=G_1(A_0+A_1)G_1^{-1}=\hat{G}_1\hat{P}\hat{Q}\hat{G}_1^{-1}.
\end{align*}
We set $M_1=\left( m^{(1)}_{ij} \right):=\hat{G}_1\hat{P}\hat{Q}\hat{G}_1^{-1}$.
Thus the difference equations satisfied by $q_i, p_i, w_i$ are determined by the equation
\begin{equation}\label{eq:d_Garnier}
\overline{P} \, \overline{Q}=M_1.
\end{equation}
Multiplying (\ref{eq:d_Garnier}) from the left by ${}^t\overline{\bm{w}}=(\overline{w}_1,  \overline{w}_2,  \overline{w}_3,  \overline{w}_4)$,
we see that $\overline{\bm{w}}$ satisfy
\begin{align*}
(\theta^\infty_1+{}^t\!M_1)\overline{\bm{w}}=\bm{0}.
\end{align*}
Since $\mathrm{rank}(\theta^\infty_1+{}^t\!M_1)=3$, $\overline{\bm{w}}$ is determined up to a scalar multiple.
From the $(1,4)$ and $(2,4)$ entries of the equation (\ref{eq:d_Garnier}) we then obtain
\begin{align}\label{eq:time_ev_pq}
\overline{p}_1\overline{q}_1=\frac{\overline{w}_1}{\overline{w}_4}m^{(1)}_{14}-\rho_{t_1}-\sigma_1, \quad
\overline{p}_2\overline{q}_2=\frac{\overline{w}_2}{\overline{w}_4}m^{(1)}_{24}-\rho_{t_2}-\sigma_2.
\end{align}
From the equation (\ref{eq:time_ev_pq}) and $(3,1)$, $(3,2)$ entries of (\ref{eq:d_Garnier}) we obtain the discrete time evolutions of $p_1$ and $p_2$:
\begin{align*}
&\frac{\rho_{t_1}+1}{\rho_1}\overline{p}_1
=\frac{\overline{w}_3}{\overline{w}_1}m^{(1)}_{31}+\frac{\overline{w}_1}{\overline{w}_4}m^{(1)}_{14}+\frac{\overline{w}_2}{\overline{w}_4}m^{(1)}_{24}
+\sigma_4+\theta^\infty_1, \\
&\frac{\rho_{t_2}}{\rho_1}\overline{p}_2
=\frac{\overline{w}_3}{\overline{w}_2}m^{(1)}_{32}+\frac{\overline{w}_1}{\overline{w}_4}m^{(1)}_{14}+\frac{\overline{w}_2}{\overline{w}_4}m^{(1)}_{24}
+\sigma_4+\theta^\infty_1.
\end{align*}
Thus the time evolutions of $q_1$ and $q_2$ are determined by
\begin{align*}
\overline{q}_i=\frac{\overline{p}_i\overline{q}_i}{\overline{p}_i} \quad (i=1,2),
\end{align*}
that is,
\begin{align*}
\frac{\rho_1}{\rho_{t_1}+1}
\overline{q}_1&=
\frac{\frac{\overline{w}_1}{\overline{w}_4}m^{(1)}_{14}-\rho_{t_1}-\sigma_1}
{\frac{\overline{w}_3}{\overline{w}_1}m^{(1)}_{31}+\frac{\overline{w}_1}{\overline{w}_4}m^{(1)}_{14}+\frac{\overline{w}_2}{\overline{w}_4}m^{(1)}_{24}
+\sigma_4+\theta^\infty_1}, \\
\frac{\rho_1}{\rho_{t_2}}
\overline{q}_2&=
\frac{\frac{\overline{w}_2}{\overline{w}_4}m^{(1)}_{24}-\rho_{t_2}-\sigma_2}
{\frac{\overline{w}_3}{\overline{w}_2}m^{(1)}_{32}+\frac{\overline{w}_1}{\overline{w}_4}m^{(1)}_{14}+\frac{\overline{w}_2}{\overline{w}_4}m^{(1)}_{24}
+\sigma_4+\theta^\infty_1}.
\end{align*}

\begin{rem}\label{rem:uniqueness} 
The matrix $M_1$ is determined up to conjugation by a diagonal matrix $D$
(see Proposition~\ref{thm:gauge_freedom} below).
Difference equations satisfied by $w_i$'s depend on this $D$.
However, the equations satisfied by $q_i$ and $p_i$ are determined independent of $D$,
since $\overline{w}_i$'s appear in the expressions of $\overline{q}_i$, $\overline{p}_i$
in the form of $\overline{w}_i m^{(1)}_{ij}/\overline{w}_j$.
Therefore $\overline{q}_i$, $\overline{p}_i$ are well-defined.
\qed
\end{rem}

\begin{prop}\label{thm:gauge_freedom}
Let $A_0$ (resp. $A_1$) $\in M(m, \mathbb{C})$ be an upper (resp. lower) triangular matrix with $m$ distinct eigenvalues.
Suppose that $g \in \mathrm{GL}(m, \mathbb{C})$ satisfy the following properties:
1) $gA_0g^{-1}$ is upper triangular,
2) $gA_1g^{-1}$ is lower triangular,
3) the order of the diagonal entries of $gA_*g^{-1}$ is the same as $A_* \ (*=0,1)$.
Then $g$ is diagonal.
\end{prop}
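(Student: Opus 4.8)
The plan is to prove the statement directly, by showing that $g$ must preserve both the standard increasing flag and the standard decreasing flag of $\mathbb{C}^m$, and is therefore diagonal. Write $e_1,\dots,e_m$ for the standard basis, $V_k=\mathbb{C}e_1+\dots+\mathbb{C}e_k$ for the increasing flag and $V'_k=\mathbb{C}e_k+\dots+\mathbb{C}e_m$ for the decreasing one. Since $A_0$ is upper triangular its eigenvalues are its diagonal entries $\lambda_1,\dots,\lambda_m$, which are pairwise distinct by hypothesis; likewise $A_1$ has distinct diagonal eigenvalues $\mu_1,\dots,\mu_m$.

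The technical core is an elementary uniqueness lemma: if $A$ has simple spectrum and $U_1\subset\dots\subset U_m=\mathbb{C}^m$ is a complete flag with $AU_k\subseteq U_k$ and with $A$ acting on $U_k/U_{k-1}$ as the scalar $\lambda_k$, then $U_\bullet$ is uniquely determined; in fact each $U_k$ is the span of the eigenlines for $\lambda_1,\dots,\lambda_k$. I would prove this by induction on $k$: a diagonalizable operator restricted to an invariant subspace is again diagonalizable, so each $U_k$ is a direct sum of $A$-eigenlines; then $U_1$ has to be the $\lambda_1$-eigenline, and passing from $U_{k-1}$ to $U_k$ adds exactly one eigenline, whose eigenvalue is forced to be $\lambda_k$ by the graded-piece condition and hence, by simplicity of the spectrum, is the $\lambda_k$-eigenline. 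Applied to $A_0$ itself this says that $V_\bullet$ is the only flag with these properties for $A_0$ and the ordered data $(\lambda_1,\dots,\lambda_m)$; applied to $A_1$ and the decreasing flag it says $V'_\bullet$ is the only one for $A_1$ with $(\mu_1,\dots,\mu_m)$.

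Now I feed $g$ into this. By property 1, $gA_0g^{-1}$ is upper triangular, i.e. $gA_0g^{-1}V_k\subseteq V_k$, equivalently $A_0(g^{-1}V_k)\subseteq g^{-1}V_k$; and by property 3 the $(k,k)$-entry of $gA_0g^{-1}$ is $\lambda_k$, from which a short computation shows that $A_0$ acts on $g^{-1}V_k/g^{-1}V_{k-1}$ as $\lambda_k$. By the lemma, $g^{-1}V_k=V_k$ for every $k$, so $g$ is upper triangular. The symmetric argument using properties 2 and 3 for $A_1$ and the decreasing flag $V'_\bullet$ gives $g^{-1}V'_k=V'_k$, so $g$ is lower triangular. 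A matrix that is simultaneously upper and lower triangular is diagonal, which is the assertion.

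I do not anticipate a genuine difficulty; the only place demanding care is the uniqueness lemma, where both the simple-spectrum hypothesis and the ordering hypothesis 3 must be used — the first to identify each newly added eigenline, the second to match it with $\lambda_k$ rather than another eigenvalue. Dropping either hypothesis breaks the statement (take $A_0=A_1=I$, or conjugate by a suitable permutation matrix), so it is worth noting in the write-up exactly where each enters.
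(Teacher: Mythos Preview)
Your argument is correct. The paper in fact states this proposition without proof, treating it as an elementary fact; your flag argument supplies a clean justification, and your closing remark correctly pinpoints where the simple-spectrum hypothesis and the ordering condition~3) are each used.
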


Next, we consider the multiplier $R_2$ for $S_2$.
Let $R_{20}$, $\tilde{A}_0$, and $\tilde{A}_1$ be the following matrices:
\begin{align*}
&R_{20}=
\left(
I_4+\frac{-\varepsilon E_2}{x+\varepsilon}
\right)
\left\{
\begin{pmatrix}
\rho_{t_1}-\rho_{t_2} & (PQ)_{12} \\
-(PQ)_{21} & \sigma_1-\sigma_2
\end{pmatrix}
\oplus
I_2
\right\}, \quad
E_2=\mathrm{diag}(0,1,0,0), \\
&\frac{\tilde{A}_0}{x}+\frac{\tilde{A}_1}{x+\varepsilon}:=
R_{20}\left[ \frac{A_0}{x}+\frac{A_1}{x+\varepsilon} \right].
\end{align*}
Define $\hat{\bm{b}}, \hat{\bm{c}} \in \mathbb{C}^2$ and $\hat{B}, \hat{C} \in M(2, \mathbb{C})$ by
\begin{align*}
\tilde{A}_0&=
W^{-1}
\begin{pmatrix}
\rho_{t_1} & * & ** \\
0 & \rho_{t_2}+1 & {}^t\bm{0}_2 \\
\bm{0}_2 & \hat{\bm{b}} & \hat{B}
\end{pmatrix}W, \quad
\tilde{A}_1=
W^{-1}
\begin{pmatrix}
\sigma_1 & 0 & {}^t\bm{0}_2 \\
* & \sigma_2-1 & {}^t\!\hat{\bm{c}} \\
\ \rotatebox{90}{**} & \bm{0}_2 & \hat{C}
\end{pmatrix}
W.
\end{align*}
The multiplier $R_2$ is given by
\begin{align*}
R_2&=G_2R_{20}, \quad
G_2=\left( (1) \oplus \hat{G}_2 \right)W, \\
\hat{G}_2&=
\begin{pmatrix}
{l_2}^{-1} & {}^t\bm{0}_2 \\
\bm{0}_2 & U_2
\end{pmatrix}
\begin{pmatrix}
1 & {}^t\!\hat{\bm{c}}(\sigma_2-1-\hat{C})^{-1} \\
-(\rho_{t_2}+1-\hat{B})^{-1}\hat{\bm{b}} & I_2
\end{pmatrix},
\end{align*}
where $l_2 \in \mathbb{C}^{\times}$ and $U_2 \in \mathrm{GL}(2, \mathbb{C})$ are determined by
\begin{align*}
\begin{pmatrix}
1 & {}^t\!\hat{\bm{c}}(\sigma_2-1-\hat{C})^{-1} \\
\bm{0} & I_2
\end{pmatrix}
\begin{pmatrix}
1 & {}^t\bm{0} \\
(\rho_{t_2}+1-\hat{B})^{-1}\hat{\bm{b}} & I_2
\end{pmatrix}
=
\begin{pmatrix}
l_2 & {}^t\bm{0} \\
\bm{l}_2 & L_2
\end{pmatrix}
\begin{pmatrix}
u_2 & {}^t\bm{u}_2 \\
\bm{0} & U_2
\end{pmatrix}.
\end{align*}
We define $\overline{A}_0$ and $\overline{A}_1$ by
\begin{align*}
\frac{\overline{A}_0}{x}+\frac{\overline{A}_1}{x+\varepsilon}&:=
R_2\left[ \frac{A_0}{x}+\frac{A_1}{x+\varepsilon} \right]
=G_2\left( \frac{\tilde{A}_0}{x}+\frac{\tilde{A}_1}{x+\varepsilon} \right)G_2^{-1}.
\end{align*}
Then $\overline{A}_0$ and $\overline{A}_1$ are upper and lower triangular, respectively. 
Note that we use the same symbol as the $S_1$-direction to denote the time evolution along the $S_2$-direction.
In this case
$\overline{P} \, \overline{Q}=\overline{A}_0+\overline{A}_1=G_2(\tilde{A}_0+\tilde{A}_1)G_2^{-1}$.
We set $M_2=\left( m^{(2)}_{ij} \right):=G_2(\tilde{A}_0+\tilde{A}_1)G_2^{-1}$.
Unlike the case of $S_1$, the matrix $\tilde{A}_0+\tilde{A}_1$ is not equal to $A_0+A_1$,
but $\mathrm{rank}(\theta^\infty_1+{}^t\!M_2)=3$ holds.
In the same way as in the case of $S_1$, we obtain the following  
\begin{align*}
\frac{\rho_{t_1}}{\rho_1}\overline{p}_1
&=\frac{\overline{w}_3}{\overline{w}_1}m^{(2)}_{31}+\frac{\overline{w}_1}{\overline{w}_4}m^{(2)}_{14}+\frac{\overline{w}_2}{\overline{w}_4}m^{(2)}_{24}
+\sigma_4+\theta^\infty_1, \\
\frac{\rho_{t_2}+1}{\rho_1}\overline{p}_2
&=\frac{\overline{w}_3}{\overline{w}_2}m^{(2)}_{32}+\frac{\overline{w}_1}{\overline{w}_4}m^{(2)}_{14}+\frac{\overline{w}_2}{\overline{w}_4}m^{(2)}_{24}
+\sigma_4+\theta^\infty_1, \\
\frac{\rho_1}{\rho_{t_1}}
\overline{q}_1&=
\frac{\frac{\overline{w}_1}{\overline{w}_4}m^{(2)}_{14}-\rho_{t_1}-\sigma_1}
{\frac{\overline{w}_3}{\overline{w}_1}m^{(2)}_{31}+\frac{\overline{w}_1}{\overline{w}_4}m^{(2)}_{14}+\frac{\overline{w}_2}{\overline{w}_4}m^{(2)}_{24}
+\sigma_4+\theta^\infty_1}, \\
\frac{\rho_1}{\rho_{t_2}+1}
\overline{q}_2&=
\frac{\frac{\overline{w}_2}{\overline{w}_4}m^{(2)}_{24}-\rho_{t_2}-\sigma_2}
{\frac{\overline{w}_3}{\overline{w}_2}m^{(2)}_{32}+\frac{\overline{w}_1}{\overline{w}_4}m^{(2)}_{14}+\frac{\overline{w}_2}{\overline{w}_4}m^{(2)}_{24}
+\sigma_4+\theta^\infty_1}.
\end{align*}
Note that these time evolutions are rather complicated when written explicitly in terms of $q_i, p_i$'s.

There remains some ambiguity regarding time evolutions of $w_i$'s.
Although this ambiguity is resolved by the compatibility condition of the $S_1$-direction and the $S_2$-direction,
we do not go into the details.

\begin{rem}
A construction in this section can be generalized to the Garnier system in $N$-variables.
Discrete analogues of the Garnier systems were also considered in~\cite{OR},
where they were derived from $2 \times 2$ linear systems.
The relationship between the two constructions is not well understood at this time.
\qed
\end{rem}

\begin{rem}
The following Schlesinger transformation of (\ref{eq:211,1111,1111}):
\[
\theta^\infty_1 \mapsto \theta^\infty_1+1, \quad \sigma_1 \mapsto \sigma_1-1
\]
gives a discrete analogue of the Fuji-Suzuki system.
\qed
\end{rem}

\begin{rem}
For any Painlev\'e-type differential equation corresponding to a Fuchsian system, in the same way as in Section~\ref{sec:d-Garnier},
we can construct a system of difference equations which can be regarded as a discrete analogue of the Painlev\'e-type differential equation.
\qed
\end{rem}


\begin{thebibliography}{}
\bibitem{B}
P. Boalch,
Quivers and difference Painlev\'e equations,
Groups and Symmetries: From the Neolithic Scots to John McKay, 
CRM Proceedings and Lecture Notes, Volume 47 (2009).

\bibitem{B2}
P. Boalch,
Simply-laced isomonodromy systems,
{\it Publ. Math. IHES}\ \textbf{116}, No. 1 (2012), 1--68.

\bibitem{DST}
A. Dzhamay, H. Sakai, and T. Takenawa,
Discrete Schlesinger Transformations, their Hamiltonian Formulation, and Difference Painlev\'e Equations,
arXiv:1302.2972.

\bibitem{DT}
A. Dzhamay, T. Takenawa,
Geometric Analysis of Reductions from Schlesinger Transformations to Difference Painlevé Equations,
arXiv:1408.3778

\bibitem{FS2}
K. Fuji and T. Suzuki,
Higher order Painlev\'e system of type $D_{2n+2}^{(1)}$ arising from integrable hierarchy,
\textit{Int. Math. Res. Not.} (2008).

\bibitem{FS1}
K. Fuji and T. Suzuki,
Drinfeld-Sokolov hierarchies of type $A$ and fourth order Painlev\'{e} systems,
\textit{Funkcial. Ekvac.}~{\bf 53} (2010), 143--167. 

\bibitem{H}
Y. Haraoka,
Regular coordinates and reduction of deformation equations for Fuchsian systems,
\textit{Banach Center Publ.}~\textbf{97} (2012), 39--58.

\bibitem{Har}
J. Harnad,
Dual isomonodromic deformations and moment maps to loop algebras,
\textit{Commun. Math. Phys.}~{\bf 166} (1994), 337--365.

\bibitem{Huk}
M. Hukuhara,
Sur les points singuliers des \'equations diff\'erentielles lin\'eaires, II,
{\it J. Fac. Sci. Hokkaido Univ.}~{\bf 5} (1937), 123--166.

\bibitem{JMU}
M. Jimbo, T. Miwa, and K. Ueno,
Monodromy preserving deformation of linear ordinary differential equations with rational coefficients. I, 
\textit{Phys. 2D} (1981), 306--352.

\bibitem{JM}
M. Jimbo and T. Miwa,
Monodromy preserving deformation of linear ordinary differential equations with rational coefficients. II,
\textit{Phys. 2D} (1981), 407--448.

\bibitem{K1}
H. Kawakami,
Four-dimensional Painlev\'e-type equations associated with ramified linear equations I: Matrix Painlev\'e systems,
arXiv:1608.03927.

\bibitem{K2}
H. Kawakami,
Four-dimensional Painlev\'e-type equations associated with ramified linear equations II: Sasano systems,
arXiv:1609.05263.

\bibitem{K3}
H. Kawakami,
Four-dimensional Painlev\'e-type equations associated with ramified linear equations III: Garnier systems and Fuji-Suzuki systems,
arXiv:1703.01379.

\bibitem{KNS}
H. Kawakami, A, Nakamura, and H, Sakai,
Degeneration scheme of 4-dimensional Painlev\'e-type equations, arXiv:1209.3836.

\bibitem{Ko}
V. P. Kostov,
The Deligne-Simpson problem for zero index of rigidity,
Perspectives of Complex Analysis, Differential Geometry and Mathematical Physics,
\textit{World Scientific} (2001), 1--35.

\bibitem{Lev}
A. H. M. Levelt,
\newblock Jordan decomposition for a class of singular differential operators,
\newblock {\it Ark. Mat.}~{\bf 13 (1)} (1975), 1--27.

\bibitem{MT}
T. Mano and T. Tsuda,
Hermite-Pad\'e approximation, isomonodromic deformation and hypergeometric integral,
\textit{Math. Z.}~\textbf{285}, Issue 1--2 (2017), 397--431.

\bibitem{OR}
C. M. Ormerod and E. M. Rains,
Commutation Relations and Discrete Garnier Systems,
\textit{SIGMA}~\textbf{12} (2016), 110, 50 pages.

\bibitem{Os}
T. Oshima, 
Classification of Fuchsian systems and their connection problem,
\textit{RIMS Kokyuroku Bessatsu}~\textbf{37} (2013), 163--192.

\bibitem{Sak1}
H. Sakai,
Rational surfaces associated with affine root systems and geometry of the Painlev\'e equations,
{\it Commun. Math. Phys.}~\textbf{220} (2001), 165--229.

\bibitem{Sak2}
H. Sakai,
Isomonodromic deformation and 4-dimensional Painlev\'e type equations,
{\it preprint, University of Tokyo, Mathematical Sciences} (2010).

\bibitem{Ss}
Y. Sasano,
Coupled Painleve VI systems in dimension four with affine Weyl group symmetry of type $D\sp {(1)}\sb 6$. II,
{\it  RIMS K$\hat{o}$ky$\hat{u}$roku Bessatsu}~{\bf B5} 
(2008), 137--152.

\bibitem{Sc}
L. Schlesinger,
\"Uber eine Klasse von Differentialsystemen beliebliger Ordnung mit festen kritischen Punkten,
\textit{J. Reine Angew. Math.}~\textbf{141} (1912), 96--145.

\bibitem{Ts}
T. Tsuda,
UC hierarchy and monodromy preserving deformation,
{\it J. Reine Angew. Math.}~\textbf{690} (2014), 1--34.

\bibitem{Tur}
H. L. Turrittin,
Convergent solutions of ordinary linear homogeneous differential equations in the neighborhood of an
irregular singular point,
{\it Acta Math.}~{\bf 93} (1955), 27--66.

\bibitem{Y1}
D. Yamakawa,
Middle convolution and Harnad duality,
\textit{Math. Ann.}~{\bf 349} (2011), 215--262.

\bibitem{Y2}
D. Yamakawa,
Fourier-Laplace transform and isomonodromic deformations,
\textit{Funkcial. Ekvac.}~\textbf{59} (2016), No.3, 315--349.
\end{thebibliography}
\end{document}